
\documentclass[12pt,twoside]{amsart}
\usepackage[margin=3cm]{geometry}
\usepackage[colorlinks=false]{hyperref}
\usepackage[english]{babel}
\usepackage{graphicx,titling}
\usepackage{float}
\usepackage{amsmath,amsfonts,amssymb,amsthm}
\usepackage{lipsum}
\usepackage[T1]{fontenc}
\usepackage{fourier}
\usepackage{color}
\usepackage[latin1]{inputenc}
\usepackage{esint}
\usepackage{caption}
\usepackage{ccicons}

\makeatletter
\def\blfootnote{\xdef\@thefnmark{}\@footnotetext}
\makeatother

\newcommand\ccnote{
    \blfootnote{\copyright\,\, Peter M. Topping and Hao Yin}
    \blfootnote{\ccLogo\, \ccAttribution\,\, Licensed under a \href{https://creativecommons.org/licenses/by/4.0/}{Creative Commons Attribution License (CC-BY)}.}
}

\usepackage[export]{adjustbox}
\numberwithin{equation}{section}
\usepackage{setspace}\setstretch{1.05}

\renewcommand{\leq}{\leqslant}

\renewcommand{\geq}{\geqslant}
\renewcommand{\mathbb}{\varmathbb}
\usepackage{fancyhdr}
\pagestyle{fancy}
\fancyhf{}

\newtheorem{theorem}{Theorem}[section]
\newtheorem{lemma}[theorem]{Lemma}
\newtheorem{corollary}[theorem]{Corollary}
\newtheorem{proposition}[theorem]{Proposition}
\newtheorem{remark}[theorem]{Remark}
\fancyhead[LE,RO]{\thepage}

\fancyhead[RE]{P. M. Topping \& H. Yin}
\fancyhead[LO]{Smoothing a measure 
using Ricci flow}


\usepackage{wasysym, paralist, latexsym, url}


\newcommand{\downto}{\downarrow}
\newcommand{\upto}{\uparrow}
\newcommand{\lap}{\Delta}
\newcommand{\weakto}{\rightharpoonup}
\newcommand{\vph}{\varphi}
\newcommand{\R}{\ensuremath{{\mathbb R}}}
\newcommand{\C}{\ensuremath{{\mathbb C}}}
\newcommand{\N}{\ensuremath{{\mathbb N}}}

\newcommand{\la}{\lambda}
\newcommand{\Om}{\Omega}
\newcommand{\ep}{\varepsilon}
\newcommand{\h}{\ensuremath{{\mathcal H}}}
\newcommand{\half}{\frac{1}{2}}
\newcommand{\de}{\delta}
\newcommand{\ga}{\gamma}
\newcommand{\ti}{\tilde}
\newcommand{\cb}{\ensuremath{{\mathcal B}}}
\newcommand{\si}{\sigma}
\newcommand{\abs}[1]{\left\vert#1\right\vert}
\newcommand{\set}[1]{\left\{#1\right\}}
\newcommand{\be}{\beta}
\newcommand{\bx}{{\bf x}}
\newcommand{\grad}{\nabla}
\newcommand{\al}{\alpha}

\newcommand{\pl}[2]{{\frac{\partial #1}{\partial #2}}}

\DeclareMathOperator{\Vol}{Vol}

\newtheorem{conj}[theorem]{Conjecture}
\newtheorem{ex}[theorem]{Example}


\address{Peter M. Topping, Mathematics Institute, University of Warwick, Coventry,
CV4 7AL, UK}
\email{p.m.topping@warwick.ac.uk}
\medskip
\address{Hao Yin, School of Mathematical Sciences, University of Science and Technology of China, Hefei, 230026, China}
\email{haoyin@ustc.edu.cn}


\begin{document}

\thispagestyle{empty}

\begin{minipage}{0.28\textwidth}
\begin{figure}[H]
\includegraphics[width=2.5cm,height=2.5cm,left]{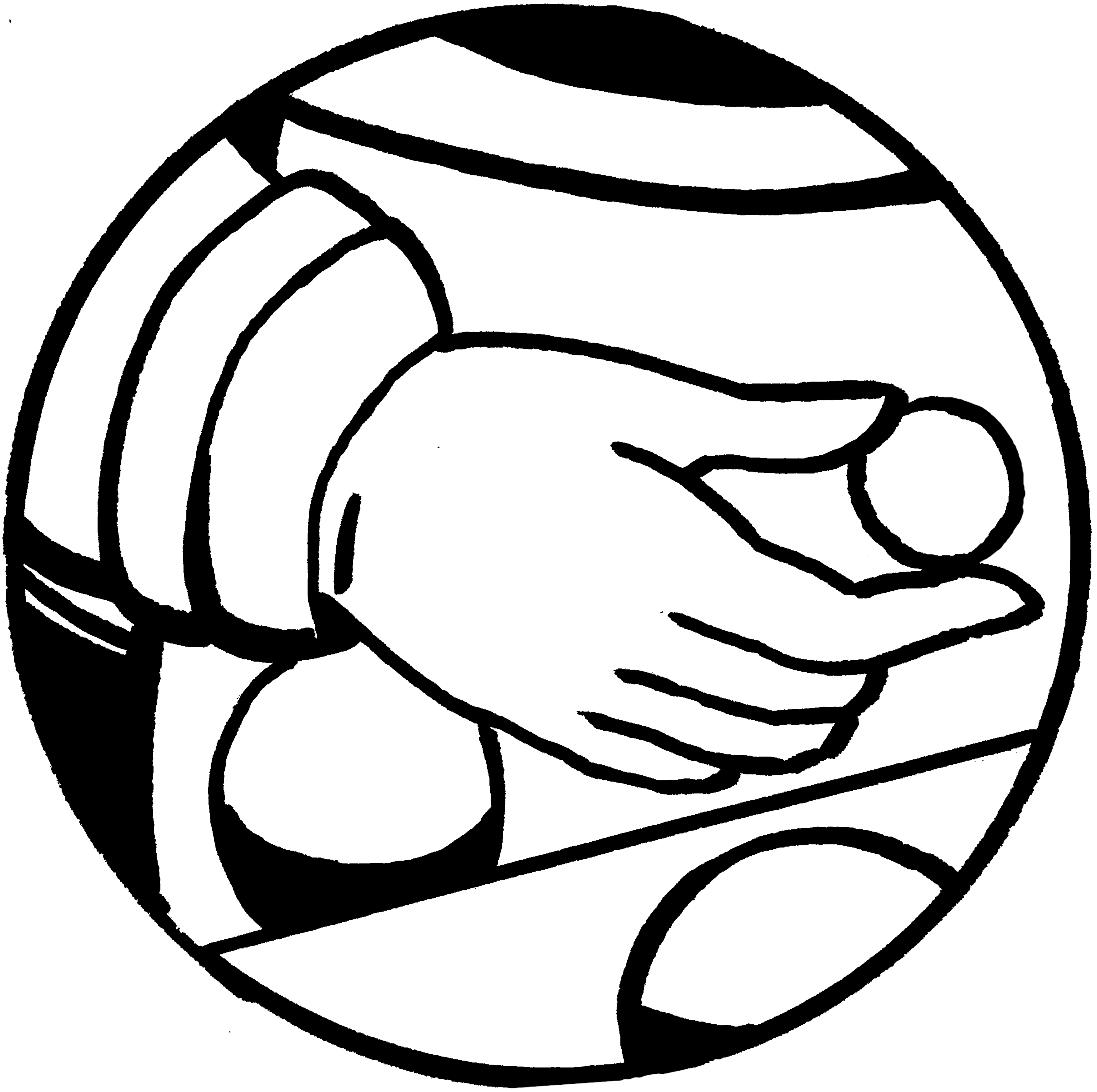}
\end{figure}
\end{minipage}
\begin{minipage}{0.7\textwidth} 
\begin{flushright}
Ars Inveniendi Analytica (2024), Paper No. 4, 28 pp.
\\
DOI 10.15781/4x5c-9q97
\\
ISSN: 2769-8505
\end{flushright}
\end{minipage}

\ccnote

\vspace{1cm}


\begin{center}
\begin{huge}
\textit{Smoothing a measure on a \\ Riemann surface using Ricci flow}


\end{huge}
\end{center}

\vspace{1cm}


\begin{minipage}[t]{.45\textwidth}
\begin{center}
{\large{\bf{Peter M. Topping}}} \\
\vskip0.15cm
\footnotesize{University of Warwick}
\end{center}
\end{minipage}
\hfill
\noindent
\begin{minipage}[t]{.5\textwidth}
\begin{center}
{\large{\bf{Hao Yin}}} \\
\vskip0.15cm
\footnotesize{University of Science and \\ Technology of China}
\end{center}
\end{minipage}

\vspace{1cm}


\begin{center}
\noindent \em{Communicated by Ben Andrews}
\end{center}
\vspace{1cm}


\noindent \textbf{Abstract.} \textit{
We formulate and solve the existence problem for Ricci flow on a Riemann surface with initial data given by a Radon measure as volume measure.
The theory leads us to a large class of new examples of nongradient expanding Ricci solitons, including the first example of a nongradient K\"ahler Ricci soliton. It also settles the question of whether a smooth flow for positive time that attains smooth initial data in a distance metric sense must be smooth down to the initial time. We disprove this by giving an example of a complete Ricci flow starting with the Euclidean plane that is not the static solution.
}
\vskip0.3cm

\noindent \textbf{Keywords.} Ricci flow, logarithmic fast diffusion equation, Ricci solitons. 
\vspace{0.5cm}


%
%
%
%
%
%
%
%
%

\section{Introduction}

Suppose we have a  Radon measure $\mu$ on a (connected, typically noncompact) Riemann surface $M$, and 
consider the naive question: 
\begin{quote}
\em Does there exist a unique smooth complete conformal Ricci flow $g(t)$ on $M$, for $t\in (0,T)$ (for some $T>0$), such that the Riemannian volume measure $\mu_{g(t)}$ converges weakly to $\mu$ as $t\downto 0$?
\end{quote}
By definition, the Ricci flow $g(t)$ is a solution of the equation $\pl{g}{t}=-2Kg$, 
where $K$ is the Gauss curvature, and thus preserves the conformal structure.
When we say \emph{conformal} Ricci flow we mean that this conformal structure agrees with the conformal structure of the Riemann surface.
If we write the flow with respect to a local complex coordinate $z=x+iy$ as 
$g(t)=u(t)(dx^2+dy^2)$ then the conformal factor $u$ satisfies the logarithmic fast diffusion 
equation $u_t=\lap\log u$.


Here we say $\mu_{g(t)}$ converges weakly to $\mu$, and write 
$\mu_{g(t)}\weakto \mu$, if for all $\psi\in C_c^0(M)$ we have
$$\int_M \psi d\mu_{g(t)} \to \int_M \psi d\mu,$$
as $t\downto 0$. This is weak-* convergence when viewed in the dual of 
$C_c^0(M)$.

By virtue of the requested uniqueness, such a flow would give a natural smoothing of a measure while respecting the conformal structure. For example,
whenever we have a conformal automorphism $\vph:M\to M$, the smoothing of the pull-back measure
$\vph^*\mu:=(\vph^{-1})_\#\mu$ would coincide with the pull-back 
$\vph^* g(t)$ of the smoothing $g(t)$ of $\mu$.



From this we can see that our initial question is too naive. If we could find a Ricci flow $g(t)$, $t\in (0,T)$, smoothing out a Dirac delta mass centred at the origin of $\R^2$, then $\vph^* g(t)$ would be another such smoothing solution for any dilation 
$\vph:\R^2\to\R^2$ defined by $\vph(x)=\la x$, for fixed $\la>0$. By uniqueness we would have to have $\vph^* g(t)=g(t)$, which is impossible for $\la\neq 1$.
This principle is applied in \cite{hui} where nonexistence is established by proving uniqueness for this specific initial data.

The problem here is not just the existence of a nontrivial conformal automorphism that leaves the measure invariant. For example, if we adjust the measure above to be the sum of the Dirac delta mass and Lebesgue measure on the whole of $\R^2$, then 
there cannot exist any onward solution achieving this weakly as $t\downto 0$.
The flow is unable to concentrate mass at an isolated point as $t\downto 0$.
More generally, as a preliminary result we will prove:
\begin{theorem}[No Dirac mass in initial data]
\label{no_dirac_mass_thm}
Suppose $M$ is any Riemann surface and $\mu$ is a Radon measure on $M$.
Suppose that for some open set $\Om\subset M$ the restriction
$\mu\llcorner \Om$ is the sum of a smooth measure (i.e. the volume measure of a smooth conformal, possibly degenerate, Riemannian metric on $\Om$) 
and a Dirac mass centred at a point within $\Om$.
Then there cannot exist $\ep>0$ and a smooth complete conformal Ricci flow $g(t)$ on $M$ for 
$t\in (0,\ep)$ such that $\mu_{g(t)}\weakto \mu$ as $t\downto 0$.
\end{theorem}
%
%
Despite these problems, our main result tells us that as soon as we rule out atoms in the initial measure we obtain existence.

%

\begin{theorem}[Main existence theorem]
\label{main_thm}
Suppose $M$ is any (connected, possibly noncompact) Riemann surface and $\mu$ is any 
(nonnegative) nontrivial Radon measure on $M$  that is nonatomic in the sense that
$$\mu(\{x\})=0 \text{ for all }x\in M.$$
Define 
\begin{equation}
\label{T_def}
T:=\left\{
\begin{aligned}
\textstyle \frac{\mu(M)}{4\pi}\qquad & \text{if }M=\C\simeq \R^2\\
\textstyle \frac{\mu(M)}{8\pi}\qquad & \text{if }M=S^2\\
\infty\qquad & \text{otherwise}.
\end{aligned}
\right.
\end{equation}
Then there exists a smooth complete conformal Ricci flow $g(t)$ on $M$, for $t\in (0,T)$, such that the Riemannian volume measure $\mu_{g(t)}$ converges weakly to $\mu$ as $t\downto 0$.

In the cases that $T<\infty$, as $t\upto T$ we have 
$$\Vol_{g(t)}(M)=(1-{\textstyle \frac{t}{T}})\mu(M)\to 0.$$
Moreover, if $\mu$ has no singular part then $\mu_{g(t)}\to \mu$ in $L^1_{loc}(M)$.
More generally, if $\Om$ is the complement of the support of the singular part of $\mu$, 
then $\mu_{g(t)}\llcorner\Om \to \mu\llcorner\Om$ in $L^1_{loc}(\Om)$.
\end{theorem}



To clarify the final assertion we need to recall that the Lebesgue-Radon-Nikodym
theorem implies that if we pick an arbitrary smooth conformal metric $g_0$ on $M$, with volume measure  $\mu_{g_0}$, then we can write $\mu$ as the sum of two mutually singular measures $\mu_{ac}$ and $\mu_{sing}$ where the absolutely continuous part
$\mu_{ac}$ can be written 
$\mu_{ac}=u_0\mu_{g_0}$ 
for some nonnegative
$u_0\in L^1_{loc}(M)$ and the singular part satisfies $\mu_{sing}\perp \mu_{g_0}$. 
The final assertion says first that if 
$\mu=\mu_{ac}$
and we write $g(t)=u(t)g_0$, then $u(t)\to u_0$ in $L^1_{loc}(M)$.
If the singular part is nontrivial then we have the same convergence in $L^1_{loc}(\Om)$ (though $\Om$ could be empty).

We stress that in Theorem \ref{main_thm} we are not just given a smooth underlying surface and a Radon measure. It is essential that we are also given the underlying complex structure for the problem to be natural.

In addition to the existence assertion of Theorem \ref{main_thm}, we also speculate that the flow  should be unique:\footnote{Since this paper originally appeared in 
preprint form, Conjecture \ref{uniqueness_conj} has been proved in \cite{TY4}.}
\begin{conj}
\label{uniqueness_conj}
Suppose $M$ is any (connected, possibly noncompact) Riemann surface and $\mu$ is any 
(nonnegative) nontrivial Radon measure on $M$ such that 
$$\mu(\{x\})=0 \text{ for all }x\in M.$$
Suppose that $\ep>0$ and that $\tilde g(t)$ is a smooth complete conformal Ricci flow on $M$, for $t\in (0,\ep)$ such that the Riemannian volume measure 
$\mu_{\tilde g(t)}$ converges weakly to $\mu$ as $t\downto 0$.
We conjecture that if $T$ and $g(t)$ are as in Theorem \ref{main_thm}, then 
$\ep\leq T$ and $g(t)=\tilde g(t)$ for all $t\in (0,\ep)$.
\end{conj}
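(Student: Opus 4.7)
The two assertions $\ep\leq T$ and $g=\tilde g$ are linked: once pointwise equality on the common interval $(0,\min(\ep,T))$ is proved, the upper bound follows. Indeed, if $\ep>T$, uniqueness on $(0,T)$ would force $\tilde g(t)=g(t)$ there, so Theorem \ref{main_thm} gives $\Vol_{\tilde g(t)}(M)\to 0$ as $t\upto T$. But $\tilde g(T)$ is a smooth complete Riemannian metric on $M$, so by continuity $\Vol_{\tilde g(T)}(M)=0$, which is impossible for a positive-definite metric on the non-empty surface $M$. Thus the entire content of the conjecture reduces to identifying the two flows.

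To attack this, I would reduce to a local PDE uniqueness statement. Pick a conformal chart $U\subset M$ with complex coordinate $z=x+iy$ and write $g(t)=u(t)(\dx^2+\dy^2)$, $\tilde g(t)=\tilde u(t)(\dx^2+\dy^2)$. Both $u$ and $\tilde u$ are smooth positive solutions of the logarithmic fast diffusion equation $\pt u = \lap \log u$ on $U\times(0,\min(\ep,T))$, and the measures $u(t)\,\dx\,\dy$ and $\tilde u(t)\,\dx\,\dy$ both converge weakly to $\mu\wc U$ as $t\downto 0$. It therefore suffices to prove: any two smooth positive local solutions of $\pt u=\lap\log u$ sharing the same weak initial trace must coincide.

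A natural route is an $L^1$-contraction. For a nonnegative $\psi\in C_c^\infty(U)$, subtract the two equations, pair with a regularised sign of $u-\tilde u$ times $\psi$, and apply Kato's inequality to the nonnegative distribution $\log u - \log\tilde u$ to obtain
\beq
\pl{}{t}\int_U (u-\tilde u)^+\psi\,\dx\,\dy \leq \int_U (\log u -\log\tilde u)^+\lap\psi\,\dx\,\dy.
\eeq
Since $\log u-\log\tilde u\leq (u-\tilde u)/\tilde u$ on $\{u>\tilde u\}$, the right-hand side is bounded by a constant times the left-hand side as soon as $\tilde u$ is bounded below on the support of $\lap\psi$. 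A Gronwall argument, iterated over a partition-of-unity exhaustion of $M$, would then force $u\equiv\tilde u$ once the initial difference vanishes in a sufficiently strong weak sense.

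\textbf{The main obstacle} is the singular part of $\mu$: near its support both $u$ and $\tilde u$ blow up as $t\downto 0$, so the integrand above is of type $\infty-\infty$ in the limit, the weak convergence $\mu_{\tilde g(t)}\weakto\mu$ does not translate into $L^1_{loc}$ control on $u-\tilde u$, and the bound $\log u-\log\tilde u\leq (u-\tilde u)/\tilde u$ degenerates where $\tilde u$ is small. I would first handle the purely absolutely continuous case, driving the Gronwall argument with the $L^1_{loc}$ convergence granted by the final assertion of Theorem \ref{main_thm}. For the general case, a local blow-up analysis at each point of $\support(\mu_{sing})$ should identify the flow there with a universal model, plausibly one of the expanding Ricci solitons mentioned in the abstract, uniquely determined by the local mass profile of $\mu$. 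Patching the model regime near $\support(\mu_{sing})$ to the absolutely continuous regime away from it would close the argument, and executing this matching rigorously is where the genuine analytic difficulty lies, which presumably explains why the statement is posed as a conjecture rather than a theorem.
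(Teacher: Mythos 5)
This statement is a \emph{conjecture} in the paper; the authors provide no proof and explicitly flag that ``the potential noncompactness of $M$ is at the heart of the difficulty.'' So there is no ``paper's own proof'' to check you against, and you yourself correctly conclude that your plan cannot be closed. With that understood, a few concrete observations on where the plan stands.

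Your reduction of $\ep\leq T$ to the identity $g\equiv\tilde g$ on the common interval is sound (Fatou applied to the pointwise limit of the conformal factors does yield $\Vol_{\tilde g(T)}(M)=0$, a contradiction, in the finite-$T$ cases $M=\C,S^2$). The core difficulty is the identity itself, and here the Kato/Gronwall sketch has a gap even in the absolutely continuous case. After pairing with $\psi$ you arrive at $\int(\log u-\log\tilde u)^+\lap\psi$; bounding this by $(u-\tilde u)^+/\tilde u$ produces an integral weighted by $|\lap\psi|$, not by $\psi$, so it is \emph{not} a multiple of your left-hand side. It lives on the annulus $\support(\lap\psi)$, where you would need a lower bound on $\tilde u$ uniform down to $t=0$ — which is false in general (near the singular support, and even generically $\tilde u\sim ct$ by the Chen bound). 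Iterating over an exhaustion of a noncompact $M$ then leaves boundary flux terms that are precisely what obstructs a naive contraction. The paper's own partial result in this direction, Theorem~\ref{L1_uniq}, does \emph{not} run a local Gronwall; it imports the much more delicate estimate of Lemma~\ref{even_stronger_again_lemma} from \cite{ICRF_UNIQ}, whose hypotheses crucially include \emph{completeness} of one flow and which yields a logarithmic (rather than geometric) error term that is integrated over dyadic radii rather than Gronwalled in time. Notice also that $\log s\leq 2(s-1)^{1/2}$, not $\leq s-1$, is what drives Lemma~\ref{loc_L1_growth_lem}: the degeneracy of $\log$ makes the estimate sub-Lipschitz, and the resulting $\sqrt t$ growth of the $L^1$ difference is the best one can hope for, not a contraction. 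Finally, your proposal to treat the singular part by matching to a ``universal'' soliton model is unlikely to work as stated: $\mu_{sing}$ can be supported on an arbitrary closed null set (Cantor-type sets, fractals), and there is no reason to expect a local soliton model. This is exactly the regime the authors cannot yet handle, which is why the statement remains a conjecture.
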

The potential noncompactness of $M$ is at the heart of the difficulty in proving this conjecture.




One case in which  the uniqueness does follow from 
prior work is when the measure has no singular part and is attained in $L^1_{loc}$. The following can be derived from
the theory in \cite{ICRF_UNIQ} (see Lemma \ref{even_stronger_again_lemma} below for the type of estimate involved) where uniqueness was established for smooth initial data.

\begin{theorem}[Uniqueness for initial data attained in $L^1$]
\label{L1_uniq}
Suppose $M$ is any (connected, possibly noncompact) Riemann surface and $\mu$ is any 
(nonnegative) nonsingular nontrivial Radon measure on $M$.
Pick any smooth conformal metric $g_0$ on $M$ and write $\mu=u_0\mu_{g_0}$ for some nonnegative $u_0\in L^1_{loc}(M)$.
If $g(t)=u(t)g_0$ and $\tilde g(t)=\tilde u(t)g_0$ are smooth complete Ricci flows on $M$, for
$t\in (0,\ep)$,
with the property that both $u(t)\to u_0$ and $\tilde u(t)\to u_0$ in $L^1_{loc}(M)$
as $t\downto 0$, then we must have $g(t)\equiv \tilde g(t)$ for all $t\in (0,\ep)$.
\end{theorem}



Equipped with the well-posedness theory for measure-valued initial data that we have just outlined, we are able to generate a large class of new expanding Ricci solitons that 
include the first known examples of nongradient Ricci solitons in two dimensions, and the first known nongradient K\"ahler Ricci solitons;
see Section \ref{soliton_sect}. (The examples can be trivially extended to higher dimensions also.) 
This answers \cite[Problem 1.87]{chowIIgeometric}.
The simplest example that illustrates this idea is when we start the Ricci flow with a measure 
$2\pi \h^1\llcorner L$ concentrated on a line in $\R^2$, where $\h^1$ is one-dimensional Hausdorff measure, and $L$ is a line in the plane such as the $y$-axis. By the invariance of the initial data under scaling, and the existence theory in this paper, we can show that a Ricci flow emanates from this measure that is an expanding soliton, but not a gradient soliton.
In fact, in this case we observe that the soliton flow can be written down explicitly as $g(t)=u(x,y,t)(dx^2+dy^2)$ where
$$u(x,y,t)=\frac{2t}{t^2+x^2}.$$
Amongst the many other examples that can be constructed using this technique are expanding solitons for which the corresponding diffeomorphisms necessarily move not just by dilation but also by rotation. As $t$ decreases to zero, these diffeomorphisms make an unbounded number of full rotations.

\vskip 0.2cm

As a further application of our theory, 
we will settle 
one of the natural uniqueness questions for Ricci flow.
%
Suppose $g(t)$ is a smooth complete Ricci flow on a surface $M$ for $t\in (0,\ep)$
with the property that the distance function $d_{g(t)}:M\times M\to [0,\infty)$
converges locally uniformly to the distance function $d_{g_0}:M\times M\to [0,\infty)$
of a smooth complete Riemannian metric $g_0$ as $t\downto 0$. 
It is a question raised by A. Deruelle and T. Richard as to whether this implies that $g(t)$ converges smoothly locally to $g_0$ as $t\downto 0$; see
\cite{TR, loss_of_init_data, DSS}.
A. Deruelle, F. Schulze and M. Simon  \cite{DSS} show that the answer is yes if certain curvature hypotheses are added to $g(t)$.

In the following theorem we demonstrate that the answer is no in general.
\begin{theorem}
\label{counter_ex_thm}
There exists a Ricci flow $g(t)$ on $\R^2$, for $t>0$, with initial data given by 
the measure
$$\mu:=\mu_{g_0}+\h^1\llcorner L,$$ 
where $g_0$ is the Euclidean metric,
so that $d_{g(t)}\to d_{g_0}$  locally uniformly as $t\downto 0$. 
\end{theorem}
%
%
One interpretation of this result is that we have nonuniqueness for the Ricci flow starting at the Euclidean plane if we ask for the initial data to be attained in the sense of convergence of the Riemannian distance. One has the static flow that remains as the Euclidean plane, but also the flow given in  Theorem \ref{counter_ex_thm}.
A slight modification of Theorem \ref{counter_ex_thm} answers Problem 1.2 from 
\cite{loss_of_init_data}.
We prove Theorem \ref{counter_ex_thm} in Section \ref{DSS_Q}.

\vskip 0.2cm

Theorem \ref{main_thm} generalises the existence theory of \cite{GT2} to the case of measure-valued initial data. For comparison and later application, we recall:
\begin{theorem}[\cite{GT2}]
\label{main_prior_thm}
Suppose $(M,g_0)$ is any (connected, possibly noncompact) orientable Riemannian surface and define 
\begin{equation}
\label{T_def2}
T:=\left\{
\begin{aligned}
\textstyle \frac{\Vol_{g_0}(M)}{4\pi}\qquad & \text{if }(M,g_0)\text{ is conformally }\C\\
\textstyle \frac{\Vol_{g_0}(M)}{8\pi}\qquad & \text{if }(M,g_0)\text{ is conformally }S^2\\
\infty\qquad & \text{otherwise}.
\end{aligned}
\right.
\end{equation}
Then there exists a smooth Ricci flow $g(t)$ on $M$, for $t\in [0,T)$, such that 
$g(0)=g_0$ and $g(t)$ is complete for all $t\in (0,T)$.
In the cases that $T<\infty$, we have 
$\Vol_{g(t)}(M)=(1-\frac{t}{T})\Vol_{g_0}(M)\to 0$ as $t\upto T$.
\end{theorem}
In this smooth case uniqueness was proved  in \cite{ICRF_UNIQ}, and is included in Theorem \ref{L1_uniq}.

\begin{remark}
\label{punctured_plane_rmk}
Let us illustrate Theorem \ref{main_prior_thm} with one example that will serve us later. Suppose $M$ is the punctured disc $B\setminus\{0\}$, and $g_0$ is the standard flat metric on $M$. This metric is incomplete because both the boundary of $B$ and the puncture can be reached from the interior in a finite distance. Theorem \ref{main_prior_thm} solves this by immediately blowing up near the boundary of $B$ to look like a Poincar\'e metric scaled to have curvature $-1/(2t)$, and blowing up near the puncture to look like a hyperbolic cusp scaled to have curvature $-1/(2t)$. It will be important later that a hyperbolic cusp has finite area.
\end{remark}

Theorem \ref{main_thm} also connects with a number of earlier works in which rough initial data is considered for Ricci flow. In one direction there has been much recent progress in understanding the existence problem for Ricci flows starting with metric spaces having suitably controlled geometry, e.g. \cite{Hochard, ST2, BCRW, TR}. 
There is a closer connection to earlier work in K\"ahler Ricci flow. Although the subtleties of this paper (and of \cite{GT2, ICRF_UNIQ}) largely stem from the possible noncompactness of the underlying space $M$, if we were to impose compactness then one could appeal to work of 
Guedj and Zeriahi \cite{Guedj} for existence and Di Nezza and Lu \cite{DNL} for uniqueness.
As we will see during the paper, some of the techniques we have at our disposal originate in the study of the logarithmic fast diffusion equation. See \cite{Vazquez} for an overview.

\begin{remark}[Recent developments]
\label{recent_dev_rmk}
Since this work originally appeared in preprint form there have been further developments in the theory. In \cite{TY4} the uniqueness of Conjecture \ref{uniqueness_conj} was proved.
One immediate consequence is that the existence theory of this paper must always preserve symmetries of the initial data, and in \cite{PT} this has led to a complete classification of two-dimensional expanding Ricci solitons in two dimensions, following the ideas of this paper. 
The examples known prior to the present paper were all \emph{gradient} Ricci solitons, which turn out to represent a negligible proportion of all expanding Ricci solitons.

Given this existence and uniqueness theory, Lemma \ref{short_time_lem_updated} below
tells us that we have continuous dependence of solutions on the initial data, giving full well-posedness.

In \cite{PT} it was established that the type of initial data we introduce in this paper is the most general that could ever be considered for complete two-dimensional Ricci flows. This was achieved by showing that any complete two-dimensional Ricci flow on a time interval $(0,T)$ \emph{must} admit a weak limit $\mu$ as $t\downto 0$ that is a \emph{nonatomic} Radon measure. 
This gives insight into Theorem \ref{no_dirac_mass_thm}. More significantly, it 
implies that \textbf{all}  complete two-dimensional Ricci flows, $t\in (0,T)$, must arise from the main existence theorem \ref{main_thm} in this paper, 
once it has been trivially modified to handle non-orientable $M$ and trivial $\mu$.
This then completes the existence and uniqueness theory for any type of initial data in two dimensions.
\end{remark}


\vskip 0.2cm
\noindent
\emph{Acknowledgements:}
PT was supported by EPSRC grant  EP/T019824/1.
HY was supported by NSFC grant 11971451.
For the purpose of open access, the authors have applied a creative commons attribution (CC BY) licence to any author accepted manuscript version arising.

\section{Pointwise a priori bounds for Ricci flows on surfaces}
\label{upper_lower_bounds}

Before we start the proof of Theorem \ref{main_thm}, we recall and adapt some a priori upper and lower bounds for the conformal factors of Ricci flows on surfaces.

\subsection{Upper bounds for the conformal factors}

If we have a Ricci flow $g(t)$ on a Riemann surface, for $t\in [0,T)$, and we know that on some simply connected subdomain $B\subset M$ we have 
\begin{equation} \label{pw_ub}
g(0)\leq \la h,
\end{equation}
 where $h$ is the unique complete conformal hyperbolic metric on $B$, and $\la>0$, then this ordering is preserved under Ricci flow and we deduce that $g(t)\leq (\la+2t)h$ for all $t$.
In particular, we obtain a purely local ongoing upper bound for the flow. 

In this paper we need to deduce a uniform local upper bound without being able to assume 
an initial $L^\infty$ bound of the form \eqref{pw_ub}. We will only have access to initial $L^1$ control. 
The estimate we require is contained in the following theorem, which can be derived from 
Theorem 1.1 in \cite{TY1}.


\begin{theorem}
\label{TY1_thm}
Suppose $g(t)=u(t)(dx^2+dy^2)$ is a smooth conformal Ricci flow on the ball $B_{2r}$, for some $r>0$ and all $t\in [0,T)$, that need not be complete.
If $t\in (0,T)$ satisfies $t\geq \frac{\Vol_{g(0)}(B_{2r})}{2\pi}$ then 
$$\sup_{B_{r}} u(t)\leq C_0r^{-2}t,$$
where $C_0<\infty$ is universal.
\end{theorem}
It is worth emphasising that for smaller times, say for $t< \frac{\Vol_{g(0)}(B_{2r})}{4\pi}$, 
no such uniform upper bound on $u(t)$ can hold, see \cite{TY1}.

An intuitive way of viewing this result is that the Ricci flow averages the conformal factor of the initial metric over balls that are large enough to have initial volume of order $t$, at least in the sense of upper bounds.
This property can be compared with the smoothing of the normal linear heat equation, where averaging takes place over balls of \emph{radius} $\sqrt{t}$.

We will apply this theorem to obtain uniform upper bounds on a sequence of Ricci flows. 


\begin{corollary}
\label{upper_cor}
Suppose $\Om\subset\R^2$ is open and $\mu$ is any nontrivial Radon measure on $\Om$ such that $\mu(\{x\})=0$ for all $x\in \Om$.
Suppose $g_i(t)=u_i(t)(dx^2+dy^2)$ is a sequence of smooth conformal Ricci flows (possibly incomplete) on $\Om$ over some common time interval $[0,T_1]$, with $T_1\in (0,\infty)$, such that the Riemannian volume measure 
$\mu_{g_i(0)}$ converges weakly to $\mu$.
Suppose that $0<\tau< T_1$ and $K\subset \Om$ is compact.
Then there exists a constant $L<\infty$ (depending on $\Om$, $\mu$, $T_1$, $\tau$, $K$) such that 
$$\sup_{K}u_i(t)\leq L$$
for all $t\in [\tau,T_1]$ and for all $i$ sufficiently large.
\end{corollary}

\begin{proof}
For each $x\in \Om$, we can pick $r>0$ sufficiently small so that $B_{3r}(x)\subset\Om$
and $\mu(B_{3r}(x))\leq \pi \tau$. Here we are using that $\mu$ is nonatomic.

By the weak convergence of $\mu_{g_i(0)}$ to $\mu$, we must have 
$$\limsup_{i\to\infty}\Vol_{g_i(0)}(\overline {B_{2r}(x)})\leq \mu(\overline {B_{2r}(x)}).$$
Therefore, for sufficiently large $i$ we  have $\Vol_{g_i(0)}({B_{2r}(x)})\leq 2\pi \tau$. 

As a result, for $i$ that large we are in a position to apply Theorem \ref{TY1_thm}
to give 
\begin{equation}
\label{local_upper}
\sup_{B_{r}(x)}u_{i}(t)\leq C_0 r^{-2}t \leq C_0r^{-2}T_1
\end{equation}
for all $t\in [\tau,T_1]$ and universal $C_0$. 

We can make this analysis at each point $x\in K$, and then appeal to compactness of $K$ to reduce to finitely many such balls $B_{r_j}(x_j)$ 
that cover $K$. It remains to set $L$ to be the largest value of the right-hand side of \eqref{local_upper}, which will correspond to the ball of smallest radius.
\end{proof}

\subsection{Lower bounds for the conformal factors}


Lower bounds for solutions of the logarithmic fast diffusion equation are quite different in nature than upper bounds. 
It is no longer possible to derive local lower bounds given controlled initial data locally. Indeed, in \cite[Theorem A.3]{GT3} for arbitrarily small $\ep>0$ an example is given of a Ricci flow
$g(t)=u(t)(dx^2+dy^2)$ on the unit disc $B\subset\R^2$, for $t\in [0,\ep)$, such that $u(0)\equiv 1$ (i.e. we start with the flat unit disc) and yet $u(t)\to 0$ uniformly as $t\upto\ep$.

Loosely speaking, we will avoid such counterexamples by establishing that in the cases of interest if the conformal factor is too small at some point $({\bf x},t)$ in space time then a little later in time the conformal factor will also be small in a whole neighbourhood. This is then something that we can hope to rule out later by considering volume. Such an estimate can be viewed as a Harnack estimate for $-\log u$.
By considering an appropriately scaled version of the cigar soliton Ricci flow we find that an extra hypothesis is required to make this work, and here we assume an \emph{upper} bound for the flow. Note that an upper bound hypothesis $u\leq M_0$ will imply positivity of $-\log \frac{u}{M_0}$, as we expect to be required in order to prove a Harnack estimate.


In practice we will also appeal to a piece of global information, as established by B.L. Chen \cite{strong_uniqueness}, that 
on any complete $n$-dimensional Ricci flow the scalar curvature satisfies $R\geq -\frac{n}{2t}$. In particular, we have the following result 
that can be shown to be equivalent to the so-called Aronson-Benilan inequality
when working with maximal solutions on $\R^2$.
\begin{lemma}[\cite{strong_uniqueness}]
\label{chen_lower_lem}
If we have a complete Ricci flow $g(t)$ on a surface, $t\in (0,T)$, then $K\geq -\frac{1}{2t}$.
\end{lemma}
Two simple consequences of having a Ricci flow satisfying $K\geq -\frac{1}{2t}$
are that if we write $g(t)=u(dx^2+dy^2)$ in a local coordinate chart $B_{2\rho}(x_0)$,
so $K=-\frac{1}{2u}\lap \log u$, then 
\begin{equation}
\label{conseq1}
\lap \log u\leq \frac{u}{t}
\end{equation}
and 
\begin{equation}
\label{conseq2}
t\mapsto \frac{u(x,t)}{t} \quad\text{ is decreasing for each }x.
\end{equation}

In the following main result giving lower bounds we opt to assume the conclusion of the lemma above rather than assume completeness.

\begin{theorem}[cf. \cite{DDD} page 655]
\label{first_lower_bounds_thm}
Suppose we have a conformal Ricci flow $g(t)=u(t)(dx^2+dy^2)$ on a domain containing 
$\overline{B_{2\rho}(x_0)}$ for times in
$(0,t_2]$, and assume that $K\geq -1/(2t)$. 
Suppose in addition that $t_1\in (0,t_2]$ and that $u(x,t)\leq M_0$ for all $t\in [t_1,t_2]$ and $x\in B_{2\rho}(x_0)$.
Then for all $y_0\in B_\rho(x_0)$ we have
\begin{equation}
\label{conc_est}
\log \frac{M_0}{u(y_0,t_1)}\leq
4\log \frac{M_0}{u(x_0,t_2)}+4\log\frac{t_2}{t_1}+\frac{2\rho^2M_0}{(t_2-t_1)}
+\frac{\rho^2M_0}{8t_1}.
\end{equation}
\end{theorem}

Thus, if at the centre point $x_0$ at the later time $t_2$ we have a lower bound for $u$, then we get a lower bound at nearby points $y_0$ and earlier times $t_1$. 
This is allowing us to move the point $x_0$ to $y_0$ compared with the simple estimate that is following from the fact that $t\mapsto u(x,t)/t$ is decreasing.



If we can establish a lower bound for $u(x_0,t_2)$ then 
Theorem \ref{first_lower_bounds_thm} gives \emph{some} lower bound for $u(y_0,t_1)$ but this bound is not very sharp as $t_1$ becomes small, and it relies on an upper bound
$u\leq M_0$ that may not hold for very small times.
However, if we use Theorem \ref{first_lower_bounds_thm} to give a lower bound for
$u(y_0,t_1)$ and then invoke the fact that $t\mapsto \frac{u(y_0,t)}{t}$ is 
decreasing (from \eqref{conseq2})
we obtain:
\begin{corollary}
\label{lower_cor}
In the setting of Theorem \ref{first_lower_bounds_thm}, 
for all $y_0\in B_\rho(x_0)$ and $t\in (0,t_1]$, we have
$$u(y_0,t)\geq \ep t$$
where $\ep>0$ can be given explicitly as
$$-\log\ep:=4\log \frac{M_0}{u(x_0,t_2)}+4\log t_2 - 3\log t_1-\log M_0
+\frac{2\rho^2M_0}{(t_2-t_1)}
+\frac{\rho^2M_0}{8t_1}.$$
\end{corollary}

The following proof  largely follows the work of Davis, DiBenedetto and Diller \cite{DDD}.



\begin{proof}[Proof of Theorem \ref{first_lower_bounds_thm}]
For $0<r\leq \rho$, define a function 
$$G(r,\rho):=\log \rho - \log r +\half\rho^{-2}(r^2-\rho^2).$$
We think of $\rho$ as a radius of a ball on which we will work, and $r$ as a polar radial coordinate on that ball. 
Observe the following properties:
\begin{compactenum}
\item
$G(\rho,\rho)=0$
\item
$\pl{G}{r}(\rho,\rho)=0$
\item
$0\leq G(r,\rho)\leq \log \rho - \log r$.
\end{compactenum}
Note above that the inequality $0\leq G$ follows because for $s\in (0,1]$ we can write
$$G(s\rho,\rho)=-\log s +\half(s^2-1)=:f(s),$$
and $f(s)\geq 0$ because $f'(s)\leq 0$ and $f(1)=0$.

For $z_0\in\R^2$ we are interested in the function on $B_\rho(z_0)$ defined by
$$x\mapsto G(|x-z_0|,\rho),$$
which inherits analogues of the properties above, and satisfies 
\begin{equation}
\label{G_int}
\int_{B_\rho(z_0)}G(|x-z_0|,\rho)dx=\int_0^\rho 2\pi r G(r,\rho)dr=
2\pi\rho^2 \int_0^1 s f(s) ds=\frac{\pi}{4}\rho^2
\end{equation}
and 
$$\lap G = -2\pi \de_{z_0}+2\rho^{-2}.$$
In particular, if $f\in C^2(\overline{B_\rho(z_0)})$ then 
\begin{equation}
\label{rep_form}
\fint_{B_\rho(z_0)} f=f(z_0)+\frac{1}{2\pi}\int_{B_\rho(z_0)}\lap f . G(|x-z_0|,\rho).
\end{equation}
Here, and in the sequel, we tend to omit $dx$ in integrals where we are integrating with respect to Lebesgue measure.
Applying this identity in the case that $f(x)=\log u(x,t)$, where 
$u$ is the conformal factor of a Ricci flow that lives on a domain containing 
$\overline{B_\rho(z_0)}$, gives (at some time $t$)
\begin{equation}
\label{rep_form2}
\fint_{B_\rho(z_0)} \log u=\log u(z_0)+\frac{1}{2\pi}\int_{B_\rho(z_0)}\lap \log u\, .\, G(|x-z_0|,\rho).
\end{equation}
We will handle the $\lap\log u$ term in two different ways to give pointwise bounds from above and below. 
In the first way we appeal to \eqref{conseq1} and apply \eqref{rep_form2} with $z_0=y_0\in B_\rho(x_0)$ to give
$$\fint_{B_\rho(y_0)} \log u\leq \log u(y_0)+
\frac{1}{2\pi t}\int_{B_\rho(y_0)}u\, G(|x-y_0|,\rho).$$
Because  $u(t_1)\leq M_0$ on $B_{2\rho}(x_0)\supset B_\rho(y_0)$, equation \eqref{G_int} gives us
$$\fint_{B_\rho(y_0)} \log u(\cdot,t_1)\leq \log u(y_0,t_1)+ \frac{\rho^2M_0}{8 t_1},$$
i.e. 
\begin{equation}\begin{aligned}
\label{eq_one_way}
\log \frac{M_0}{u(y_0,t_1)}&\leq \left(\fint_{B_\rho(y_0)} \log \frac{M_0}{u(\cdot,t_1)}\right)+\frac{\rho^2M_0}{8 t_1}\\
&\leq
4\left(\fint_{B_{2\rho}(x_0)} \log \frac{M_0}{u(\cdot,t_1)}\right)+\frac{\rho^2M_0}{8 t_1}
\end{aligned}\end{equation}
This should be compared with (55) in \cite{DDD}.
Note that we again used the hypothesis $u\leq M_0$ to ensure that the integrand was nonnegative, allowing us to enlarge the domain of integration. (The factor $4$ comes from the fact that we are taking average integrals and we are quadrupling the area.)

There is another way we can make progress with \eqref{rep_form2}, again using
the inequality $K\geq -1/(2t)$, but this time using its consequence that $u(x,t)/t$ is decreasing in $t$, for each fixed $x$.
This will give an inequality in the other direction. 
Take \eqref{rep_form2} at some time $s\in [t_1,t_2]$, 
and $z_0=x_0$ and $\rho$ replaced by $2\rho$ 
(so now we need the Ricci flow to exist on $\overline{B_{2\rho}(x_0)}$).
Subtracting $\log s$ from both sides,  we obtain
$$\fint_{B_{2\rho}(x_0)} \log \frac{u(\cdot,s)}{s}
=\log \frac{u(x_0,s)}{s}+\frac{1}{2\pi}\int_{B_{2\rho}(x_0)} \lap\log u(x,s)\,G(|x-x_0|,2\rho),$$
and thus
$$\fint_{B_{2\rho}(x_0)} \log \frac{u(\cdot,t_1)}{t_1}
\geq\log \frac{u(x_0,t_2)}{t_2}+\frac{1}{2\pi}\int_{B_{2\rho}(x_0)} u_t(x,s)G(|x-x_0|,2\rho).$$
Averaging over $s$ (only the final term depends on $s$ now) gives
$$\fint_{B_{2\rho}(x_0)} \log \frac{u(\cdot,t_1)}{t_1}
\geq\log \frac{u(x_0,t_2)}{t_2}+\frac{1}{2\pi(t_2-t_1)}\int_{B_{2\rho}(x_0)} 
[u(x,t_2)-u(x,t_1)]G(|x-x_0|,2\rho).$$
Using $u(\cdot,t_2)\geq 0$ and $u(\cdot,t_1)\leq M_0$ again, as well as \eqref{G_int}, gives
\begin{equation}\begin{aligned}
\fint_{B_{2\rho}(x_0)} \log \frac{u(\cdot,t_1)}{t_1}
&\geq\log \frac{u(x_0,t_2)}{t_2}-\frac{M_0}{2\pi(t_2-t_1)}\int_{B_{2\rho}(x_0)} 
G(|x-x_0|,2\rho)\\
&=\log \frac{u(x_0,t_2)}{t_2}-\frac{\rho^2M_0}{2(t_2-t_1)}.
\end{aligned}\end{equation}
Rearranging gives
$$
\fint_{B_{2\rho}(x_0)} \log \frac{M_0}{u(\cdot,t_1)}
\leq
\log \frac{M_0}{u(x_0,t_2)}+\log\frac{t_2}{t_1}+\frac{\rho^2M_0}{2(t_2-t_1)}.
$$
This can now be combined with \eqref{eq_one_way} to conclude 
$$\log \frac{M_0}{u(y_0,t_1)}\leq
4\log \frac{M_0}{u(x_0,t_2)}+4\log\frac{t_2}{t_1}+\frac{2\rho^2M_0}{(t_2-t_1)}
+\frac{\rho^2M_0}{8t_1}.
$$

\end{proof}

\section{{$L^1$ bounds for Ricci flows on surfaces}}

In general, a locally defined Ricci flow can lose volume at uncontrolled rate. The following result rules this out when the Ricci flow in question lies within an instantaneously complete Ricci flow defined on the whole plane.\footnote{This lemma was subsequently refined in \cite[Lemma 2.2]{PT}.}

\begin{lemma}\label{lem:lowerL1bound_new}
Suppose $g(t)$ is an instantaneously complete Ricci flow on $\R^2$ for $t\in [0,T)$,
with $\Vol_{g(0)}(B)\geq v_0>0$.
Then there exists $\eta>0$ universal such that for all $t\in [0,T)$ 
with $t<\eta v_0$ we have 
$$\Vol_{g(t)}(B_2)\geq v_0/2.$$
\end{lemma}
\begin{proof}
Let $\tilde{H}$ be the complete hyperbolic metric on $\R^2\setminus B_{3/2}$, and note that $\Vol_{\tilde H}(\R^2\setminus B_2)<\infty$.

Pick a sequence of smooth metrics $g_i$ on $\R^2$ such that $g_i\leq \tilde H/i$ (where defined) and so that $g_i\leq g(0)$ throughout $\R^2$, with $g_i\equiv g(0)$ on $B$.
Let $g_i(t)$ be the unique instantaneously complete Ricci flow on $\R^2$ starting with $g_i$. Then $g_i(t)\leq g(t)$, because $g(t)$ is `maximally stretched' \cite{GT2}, so $\Vol_{g(t)}(B_2)\geq \Vol_{g_i(t)}(B_2)$,
and $g_i(t)\leq (\frac{1}{i}+2t)\tilde H$, so 
$\Vol_{g_i(t)}(\R^2\setminus B_2)\leq (\frac{1}{i}+2t)\Vol_{\tilde H}(\R^2\setminus B_2)$.

Because $\Vol_{g_i}(\R^2)\geq \Vol_{g_i}(B)=\Vol_{g(0)}(B)\geq v_0$, we have
$\Vol_{g_i(t)}(\R^2)\geq v_0-4\pi t$ and $g_i(t)$ exists for at least a time $v_0/(4\pi)$.
Therefore 
\begin{equation}\begin{aligned}
\Vol_{g(t)}(B_2) &\geq \Vol_{g_i(t)}(B_2)\\
&=\Vol_{g_i(t)}(\R^2)-\Vol_{g_i(t)}(\R^2\setminus B_2)\\
&\geq v_0-4\pi t - (\textstyle \frac{1}{i}+2t)\Vol_{\tilde H}(\R^2\setminus B_2),
\end{aligned}\end{equation}
and by sending $i$ to infinity we deduce that
$$\Vol_{g(t)}(B_2) \geq 
v_0-t\left(4\pi + 2\Vol_{\tilde H}(\R^2\setminus B_2)\right),$$
which is enough to conclude.
\end{proof}


We will also require a bound in the opposite direction. In contrast to the estimate above, this is now a purely local assertion.


\begin{lemma}\label{lem:upperL1bound_new}
Suppose $0<r<s$ and $g(t)$ is a conformal Ricci flow on $B_s$, for $t\in [0,T)$.
Then there exists $\eta>0$ depending on $r/s$ such that
$$\Vol_{g(t)}(B_{r})\leq \eta t+ \Vol_{g(0)}(B_s)$$
for all $t\in [0,T)$.
\end{lemma}

This lemma can be proved directly via techniques developed for the study of the logarithmic fast diffusion equation. We will work via the following result, stated in 
\cite[Lemma 2.12]{TY2} based on the estimates in \cite{ICRF_UNIQ}.

\begin{lemma}
\label{even_stronger_again_lemma}
Suppose $1/2<r_0<r_0^{1/3}<R<1$, and $\gamma\in (0,\half)$.
Suppose $g_1(t)=u_1(t)(dx^2+dy^2)$ is any complete Ricci flow on $B$, 
and $g_2(t)=u_2(t)(dx^2+dy^2)$ is any Ricci flow 
on $\overline {B_R}$, both for $t\in (0,T]$. 
Then we have for all $t\in (0,T]$ that
\begin{equation}
\begin{aligned}
\left[\int_{B_{r_0}}(u_2(t)-u_1(t))_+ \right]^{\frac{1}{1+\ga}}
&\leq
\liminf_{s\downto 0}
\left[\int_{B_{R}}(u_2(s)-u_1(s))_+\right]^{\frac{1}{1+\ga}}\\
&\quad+C(\ga)
\left[\frac{t}{
(-\log r_0)\left[\log(-\log r_0)-\log(-\log R)\right]^\ga}
\right]^{\frac{1}{1+\ga}}.
\end{aligned}
\end{equation}
\end{lemma}


\begin{proof}[{Proof of Lemma \ref{lem:upperL1bound_new}}]
By scaling the parametrisation, we may assume that $s=1$.
We apply Lemma \ref{even_stronger_again_lemma} with $g_1(t)$ the 
big-bang Ricci flow on $B$, i.e. $2t$ times the Poincar\'e metric $H=h(dx^2+dy^2)$, 
and $g_2(t)$ equal to the Ricci flow $g(t)=u(t)(dx^2+dy^2)$ we are analysing.
Evaluating the limit $s\downto 0$, we find that
\begin{equation*}
\begin{aligned}
\left[\int_{B_{r_0}}(u(t)-2th)_+\right]^{\frac{1}{1+\ga}}
&\leq
\left[\int_{B_{R}} u(0) \right]^{\frac{1}{1+\ga}}\\
&\quad+C(\ga)
\left[\frac{t}{
(-\log r_0)\left[\log(-\log r_0)-\log(-\log R)\right]^\ga}
\right]^{\frac{1}{1+\ga}}.
\end{aligned}
\end{equation*}
Taking the limit $R\upto 1$ and \emph{then} $r_0\upto 1$  gives 
\begin{equation*}
\int_{B}(u(t)-2th)_+ \leq \Vol_{g(0)}(B).
\end{equation*}
In particular, we have 
\begin{equation}\begin{aligned}
\Vol_{g(t)}(B_{r}) &\leq \Vol_{2tH}(B_{r})+ \Vol_{g(0)}(B)\\
&= 2t \Vol_{H}(B_{r})+ \Vol_{g(0)}(B)
\end{aligned}\end{equation}
for all $t\in [0,T)$, which is the lemma with an explicit value of $\eta$.
\end{proof}

\section{{The proof of the main existence theorem \ref{main_thm}}}

The proof will take the familiar structure that we will approximate the initial measure $\mu$ by objects that we know how to flow, and then try to take a limit of those flows.
In our case we approximate $\mu$ by the volume measure of smooth Riemannian surfaces, and then flow each one of these using the theory of instantaneously complete Ricci flows that was introduced in \cite{JEMS} and developed in \cite{GT2}. We need estimates that will  not only give convergence of a subsequence to a limit flow, but also guarantee that the initial data is not lost in the limit.

\subsection{The approximation}

It is standard to use mollification to smooth out a measure. In our situation that we have a Radon measure $\mu$ on a Riemann surface $M$, we can take a countable atlas on $M$ and use a partition of unity to decompose $\mu$ into a countable sum of measures that are each supported within a coordinate chart. We can then mollify within each chart to give  degenerate smooth conformal Riemannian metrics $u(dx^2+dy^2)$ and recombine a large but finite number of these to give a sequence of (typically degenerate) smooth conformal Riemannian metrics $\ti g_i$ on $M$ whose volume measures converge weakly to $\mu$ (and in $L^1_{loc}$ if $\mu$ has no singular part). If we now pick any  conformal Riemannian metric $g_0$ on $M$, then we can define smooth conformal \emph{nondegenerate} Riemannian metrics
$$g_i:=\ti g_i+\frac{g_0}{i}$$
whose volume measures will converge weakly to $\mu$ (and in $L^1_{loc}$ if $\mu$ has no singular part). 
It is a feature of such a construction that volume can only be lost, not gained, in the limit $i\to\infty$. By mollifying sufficiently aggressively and picking $g_0$ to have finite volume (which is possible because we are not insisting on completeness) we can ensure that the volumes $\Vol_{g_i}(M)$ are finite and converge to $\mu(M)$.  
To summarise, we find:

\begin{lemma}
\label{smooth_approx_lem}
Let $\mu$ be a Radon measure on a Riemann surface $M$. Then there exists a sequence of smooth  conformal finite-volume Riemannian metrics $g_i$ on $M$ such that 
$$\mu_{g_i}\weakto \mu\qquad\text{ and }\qquad \Vol_{g_i}(M)\to \mu(M),$$
as $i\to\infty$.
If $\mu$ has no singular part, and thus if $g_0$ is any smooth conformal metric we can write 
$\mu=u_0\mu_{g_0}$ 
where $u_0\in L^1_{loc}(M)$ is nonnegative,
then if we write $g_i=u_i g_0$ we may assume that $u_i\to u_0$ in $L^1_{loc}(M)$.
More generally, if $\Om$ is the complement of the support of the singular part of $\mu$
and we write $\mu\llcorner\Om=u_0\mu_{g_0}\llcorner \Om$ 
for nonnegative $u_0\in L^1_{loc}(\Om)$, then 
we may assume that $u_i\to u_0$ in $L^1_{loc}(\Om)$.
\end{lemma}





\subsection{Generating the approximating Ricci flows}
\label{g_i_construct_sect}

Being smooth Riemannian metrics on a (connected) surface, each $g_i$ 
from Lemma \ref{smooth_approx_lem} can be evolved uniquely as an instantaneously complete Ricci flow $g_i(t)$ using Theorem \ref{main_prior_thm}.
The existence time interval is $[0,T_i)$ where
$T_i=\infty$ unless $M=S^2$,
in which case we are in the classical situation of Hamilton and Chow \cite{ham_surf, chow} and the existence time is $T_i=\frac{\Vol_{g_i}(M)}{8\pi}$, or 
$M=\C$ and $\Vol_{g_i}(M)<\infty$, in which case our solutions coincide with the maximal solutions of the logarithmic fast diffusion equation studied in \cite{DD}, for example,  and $T_i=\frac{\Vol_{g_i}(M)}{4\pi}$. 


By Lemma \ref{smooth_approx_lem}, we have 
$T_i\to T_\infty$, where $T_\infty:=\frac{\mu(M)}{8\pi}$ if $M=S^2$ and 
$T_\infty:=\frac{\mu(M)}{4\pi}$ if $M=\C$, but otherwise $T_\infty:=\infty$. 

The essential point is that we can pick $T>0$ independent of $i$ such that $g_i(t)$ exists for all
$t\in [0,T)$.
It is a further consequence of Theorem \ref{main_prior_thm}
that in the cases that $M=\C$ or $M=S^2$, for all $t\in (0,T)$ we have
\begin{equation}
\label{vol_bd}
\Vol_{g_i(t)}(M)\leq \Vol_{g_i(0)}(M)\to \mu(M)
\end{equation}
as $i\to\infty$. This will be useful to verify that the existence time of the Ricci flow $g(t)$ we are trying to construct cannot be larger than claimed.


The key ingredient in order to construct $g(t)$, and to verify its initial condition, is to establish upper and lower bounds for the conformal factors of the approximate flows $g_i(t)$ that are uniform in $i$, using the estimates from Section \ref{upper_lower_bounds}. Parabolic regularity theory will then give us compactness.


\subsection{Extracting a short time limit flow}

In Section \ref{g_i_construct_sect} we found conformal instantaneously complete Ricci flows $g_i(t)$ on $M$, for $t\in [0,T)$, with the volume measures of $g_i(0)$ converging weakly to $\mu$ as $i\to\infty$. The goal of this section is to prove the following compactness result for this or any other such sequence $g_i(t)$.
\begin{lemma}
\label{short_time_lem}
Suppose that we have a sequence of smooth instantaneously complete conformal Ricci flows $g_i(t)$ on a Riemann surface $M$, for $t\in [0,T)$, such that the volume 
measures of $g_i(0)$ converge weakly to a 
nontrivial
nonatomic Radon measure $\mu$.
Then after passing to a subsequence, and possibly reducing $T>0$, we have
$g_i(t)\to g(t)$ smoothly locally on $M\times (0,T)$, where $g(t)$ is itself a smooth complete Ricci flow on $M$ for $t\in (0,T)$. 
Moreover, the volume measures of $g(t)$ converge weakly to $\mu$ as $t\downto 0$.
\end{lemma}

The proof will require uniform ($i$-independent) a priori estimates on the conformal factors of the flows $g_i(t)$, which parabolic regularity will turn into $C^k$ estimates thus giving us the required compactness. The a priori estimates will also allow us to deduce completeness of the limit. 
The following is the main sub-lemma that gives these required estimates.
%

\begin{lemma}
\label{short_time_lem_sub}
Under the hypotheses of Lemma \ref{short_time_lem} there exists $T_0\in (0,T)$ so that for every $\tau\in (0,T_0)$, 
each point $x\in M$ admits a coordinate neighbourhood $\cb$ such that after passing to a subsequence in $i$ and writing
$g_i(t)=u_i(t)(dx^2+dy^2)$ we have the uniform estimates
\begin{equation}
\label{required_upper}
u_i(t)\leq L \qquad\text{ on } \cb\times [\tau,T_0]
\end{equation}
and 
\begin{equation}
\label{required_lower}
u_i(t)\geq \ep t \qquad\text{ on } \cb\times [0,T_0]
\end{equation}
where $L$ and $\ep$ are independent of $i$ and $t$. 
In the special case that $M=B$, so we can define the conformal factors $u_i(t)$ globally in $B$, we have the improved estimate
\begin{equation}
\label{required_lower_B}
u_i(t)\geq 2th \qquad\text{ on } B\times [0,T)
\end{equation}
where $h(dx^2+dy^2)$ is the Poincar\'e metric on $B$.
\end{lemma}

It is not necessary to pass to a subsequence in the lemma above, but allowing this will give a small shortcut in the proof. 

There is one additional estimate giving a lower bound for the conformal factors that we will need in the case that $M=\R^2$ in order to show that the extracted limit is complete. 
\begin{lemma}
\label{R2_lower_lem}
If $e^{2v}(dx^2+dy^2)$ is a smooth complete Riemannian metric on $\R^2$ with Gauss curvature bounded below by $K\geq -\si_1$ throughout, and so that 
$\|v\|_{C^2(B_2)}\leq \si_2$, then there exists $\de>0$ depending only on $\si_1$ and $\si_2$ such that 
\begin{equation}
\label{required_lower_R2}
e^{2v}\geq \frac{\de}{(r\log r)^2} \qquad\text{ on }\ \R^2\setminus B_2
\end{equation}
where $r^2=x^2+y^2$.
\end{lemma}
\begin{proof}[Proof of Lemma \ref{R2_lower_lem}]
This result is very similar to \cite[Theorem 3.2]{GT2}, so we only sketch the proof.
Define a function on $\R^2\setminus B$ by 
$$\ga(x,y)=-\log (r\log r)$$
so that $e^{2\ga}(dx^2+dy^2)$ is the unique complete conformal hyperbolic metric on 
$\R^2\setminus B$. Choose a cut-off function $\vph\in C_c^\infty(B_2,[0,1])$ with
$\vph\equiv 1$ on $B_{3/2}$. Then we can define a complete interpolated metric 
$G:=e^{2w}(dx^2+dy^2)$ on $\R^2\setminus B$ by setting
$$w=\vph \ga + (1-\vph)v,$$
and it is easy to verify that the Gauss curvature $-e^{-2w}\lap w$ of $G$ is bounded below by some constant $-1/\de<0$ depending only on $\si_1$, $\si_2$ and our choice of $\vph$.
But the Gauss curvature of $\de e^{2\ga}(dx^2+dy^2)$ is identically $-1/\de$,
and so Yau's version of the Schwarz lemma, in the form given in \cite[Theorem B.3]{GT2}, tells us that $\de e^{2\ga}\leq e^{2w}$.
Restricting to $\R^2\setminus B_2$, where $w=v$, then gives
$$e^{2v}\geq \de e^{2\ga}=\frac{\de}{(r\log r)^2}.$$
\end{proof}

Let us assume Lemma \ref{short_time_lem_sub} for the moment and try to complete the proof of Lemma \ref{short_time_lem}.

Let $T_0$ be as in Lemma \ref{short_time_lem_sub}. That lemma then tells us that for each $\tau\in (0,T_0)$, each point $x\in M$ admits a coordinate neighbourhood $\cb$ 
so that $u_i(t)$ is sandwiched between $\ep\tau$ and $L $ on $\cb\times [\tau,T_0]$. Such uniform control is enough to bring parabolic regularity theory into play and give uniform interior $C^k$ bounds on $\cb\times (\tau,T_0]$. Since $\tau\in (0,T_0)$ was arbitrary we deduce that a subsequence of $u_i(t)$ converges smoothly locally on $\cb\times (0,T_0]$ to the conformal factor $u(t)$ of a smooth Ricci flow for $t\in (0,T_0]$. 
By repeating for countably many suitable centre points $x\in M$ and taking a diagonal subsequence we deduce that there exists a smooth limit Ricci flow $g(t)$ on $M$ for $t\in (0,T_0]$, as required.

At this point we have to verify that $g(t)$ is complete. For this purpose, by lifting to the universal cover, we may as well assume that $M$ is either $S^2$, $B$ or $\R^2$, and the completeness is obvious in the case $M=S^2$.
If $M=B$, then the lower bound \eqref{required_lower_B} of Lemma \ref{short_time_lem_sub}
passes to the limit to give $u(t)\geq 2th$, and so $g(t)$ inherits the completeness of the Poincar\'e metric.
If $M=\R^2$, then we will use Lemma \ref{R2_lower_lem}.
For each $t\in (0,T_0)$ we have $K_{g_i(t)}\geq -1/(2t)$ throughout by Lemma \ref{chen_lower_lem}. If we write $g_i(t)=u_i(t)(dx^2+dy^2)$ globally on $\R^2$ then
we have already established $i$-independent  $C^2$ bounds on $B_2$, say, at each $t\in (0,T_0)$. (These bounds depend on $t$.)
Therefore Lemma \ref{R2_lower_lem} gives us a lower bound
\begin{equation}
u_i(t)\geq \frac{\de}{(r\log r)^2} \qquad\text{ on } \R^2\setminus B_2
\end{equation}
for positive $\de$ that can depend on $t$ etc. but not on $i$.
This bound passes to the limit, ensuring that the limit conformal factor does not decay so fast that completeness could fail.
This completes the proof that $g(t)$ is instantaneously complete in all cases.

In order to verify the weak convergence of $\mu_{g(t)}$ to $\mu$ it suffices to check it in some neighbourhood of an arbitrary point $x_0\in M$. This is provided by the following lemma.



\begin{lemma}
Suppose we have a sequence of Ricci flows $g_i(t)=u_i(t)(dx^2+dy^2)$ on $B$, each for 
$t\in [0,T)$, with the properties that $g_i(t)$ converges smoothly locally on $B\times (0,T)$ to a Ricci flow $g(t)=u(t)(dx^2+dy^2)$. 
Suppose that $\mu_{g_i(0)}\weakto \mu$ as $i\to\infty$ and we have the uniform control 
that there exists $\ep>0$ such that $u_i(t)\geq \ep t$ for all $i$ and all $t\in [0,T)$.
Then 
$$\mu_{g(t)}\weakto \mu\quad\text{ as } t\downto 0.$$
\end{lemma}

This lemma is pinning down a precise condition that allows us to interchange the limits
$t\downto 0$ and $i\to\infty$.

\begin{proof}
We need to establish that for all $\vph\in C_c^0(B)$ we have
$$\lim_{t\downto 0} \int_B \varphi u(t) = \int_B \varphi d\mu.$$
By approximation, we may assume that $\vph\in C_c^\infty(B)$.
What we know already is that 
$$\lim_{i\to \infty} \int_B \varphi u_i(0) = \int_B \varphi d\mu$$
and that for all $t\in (0,T)$,
$$\lim_{i\to \infty} \int_{B} \varphi u_i(t) = \int_{B} \varphi u(t).$$
It remains to prove
\begin{equation*}
\lim_{t\downto 0} \int_{B} \varphi (u_i (t) -u_i(0))=0
\end{equation*}
{\bf uniformly} with respect to $i$.

Recall that $u_i$ is a smooth solution to the equation $\partial_t u_i = \triangle \log u_i$. Pick $r\in (0,1)$ such that $B_r$ contains the support of $\vph$, 
and set $s=(1+r)/2$ so that $r<s<1$.
We have
\begin{eqnarray}\label{eqn:uniform}
	\abs{\partial_t \int_{B} \varphi u_i(t)} &=& \abs{\int_{B} \varphi \triangle \log u_i} = \abs{\int_{B} \triangle\varphi  \log u_i} 	\\ \nonumber
	&\leq& C \int_{B_r} \abs{\log u_i} \\ \nonumber
	&\leq&  C \int_{B_r\cap \set{u_i\geq 1}} u_i + C \int_{B_r\cap \set{u_i<1}} \abs{\log u_i} \\ \nonumber
	&\leq& \Vol_{g_i(t)}(B_r)+ C \abs{\log (\ep t)} \\ \nonumber
	&\leq& \eta t+\Vol_{g_i(0)}(B_s)+ C \abs{\log (\ep t)} \\ \nonumber
	&\leq& \eta t+\mu(\overline{B_s})+1+ C \abs{\log (\ep t)},
\end{eqnarray}
for sufficiently large $i$, using Lemma \ref{lem:upperL1bound_new}.
Now that the right-hand side does not depend on $i$, this inequality can  be integrated with respect to $t$ to give the required control.
\end{proof}

\noindent
In order to prove Lemma \ref{short_time_lem} it thus suffices to prove Lemma \ref{short_time_lem_sub}.

\begin{proof}[{Proof of Lemma \ref{short_time_lem_sub}}]
By lifting to the universal cover it suffices to assume that $M$ is $S^2$, $\C$ or $B$.

\vskip 0.2cm
\noindent
{\bf The case $M=B$.}

\noindent
This is the easiest case, given the existing literature. We are free to pick  \emph{any} $T_0\in (0,T)$ (although $L $ will depend on $T_0$).
We will take our coordinate neighbourhood to be explicitly some ball $\cb=B_r(x)\subset B$.

For our given $x\in B$, pick $r>0$ sufficiently small so that $B_{2r}(x)\subset B$.
Corollary \ref{upper_cor}, applied with $K$ there equal to $\overline{B_r(x)}$ here, and $T_1$ there equal to $T_0$ here,
immediately gives us the required upper bound \eqref{required_upper}.

For the uniform lower bound \eqref{required_lower_B} -- and hence
\eqref{required_lower} -- on $u_i(t)$, consider the Poincar\'e metric 
$H=h(dx^2+dy^2)$ on $B$.
According to \cite{GT2}, the Schwarz-Pick-Ahlfors-Yau lemma can be applied in order to prove that any complete Ricci flow on $B$ lies above the \emph{big-bang} Ricci flow $2tH$, and in particular $g_i(t)\geq 2tH$ and hence $u_i(t)\geq 2th$  for all 
$t\in (0,T)$.
This completes the argument in the case $M=B$.

The remaining two cases are more subtle because the Schwarz-Pick-Ahlfors-Yau lemma cannot be applied to obtain the global information required for a uniform lower bound.


\vskip 0.2cm
\noindent
{\bf The case $M=\C$.}

\noindent
By translating the solution within $\C$, we may assume that $\mu(B_{1/2})>0$.
If we set $v_0=\half \mu(B_{1/2})$ then $\Vol_{g_i}(B_{1/2})\geq v_0$ for sufficiently large
$i$. We can safely ignore the finitely many terms that fail this inequality. 

Then Lemma \ref{lem:lowerL1bound_new} tells us that 
$\Vol_{g_i(t)}(B)\geq v_0/2$ provided $t\leq \eta v_0$.
(If necessary, reduce $v_0$ so that $\eta v_0<T$.)
We are already in a position to define $T_0:=\half \eta v_0$.
Because of the volume bound at time $\eta v_0=2T_0$, 
for each $i$ there exists $x_i\in B$
such that 
$$u_i(x_i, 2T_0)\geq v_0/(2\pi).$$
The upper bound \eqref{required_upper} near $x$, e.g. on $\cb:=B_1(x)$ will follow 
immediately from Corollary \ref{upper_cor}. In fact, we will need a more extensive upper bound in order to obtain the required lower bound. With this in mind we set $\rho=|x|+2$ and apply Corollary \ref{upper_cor} with $T_1$ there equal to $2T_0$ here, $\tau$ there equal to $T_0$ here, and $K$ there equal to $\overline{B_{2\rho+1}(0)}$. 
We deduce a uniform upper bound that we write 
\begin{equation}
\label{upper_bd_T02T0}
u_i(t)\leq M_0\qquad\text{throughout }\overline{B_{2\rho+1}(0)}\times [T_0,2T_0].
\end{equation}


We apply the Harnack inequality implication of Corollary \ref{lower_cor} with $x_0$ there equal to $x_i$ here, and with $[t_1,t_2]$ there equal to $[T_0,2T_0]$ here. 
Note that 
$\overline{B_{2\rho}(x_i)}\subset \overline{B_{2\rho+1}(0)}$ where we have the upper bound \eqref{upper_bd_T02T0} for $u_i(t)$.

We deduce lower bounds for $u_i$ at all points $y_0\in B_\rho(x_i)$.
But $B_\rho(x_i)\supset B_{|x|+1}(0) \supset B_1(x)$, and so we deduce the required lower bounds throughout $\cb=B_1(x)$, and at all times $t\in (0,T_0]$.



\vskip 0.2cm
\noindent
{\bf The case $M=S^2$.}

\noindent
There are various ways that this case can be handled, exploiting the compactness of $S^2$. Indeed, we could have taken an approach as we did to handle the case $M=\C$ and not need to pass to a subsequence. Instead we take a shorter route.

For sufficiently large $i$ we must have $\Vol_{g_i(0)}(S^2)\geq \half \mu(S^2)$.
Because Ricci flows on $S^2$ lose area at a rate $8\pi$, we must then have
$\Vol_{g_i(t)}(S^2)\geq \half \mu(S^2)-8\pi t$.
In particular, if we fix $T_0:=\frac{1}{64\pi}\mu(S^2)$, then over the time interval $[0,2T_0]$ we have $\Vol_{g_i(t)}(S^2)\geq \frac14 \mu(S^2)$.
Because of the flexibility of M\"obius transformations, after passing to a subsequence and writing $g_i(t)=u_i(t)(dx^2+dy^2)$ with respect to an appropriate conformal coordinate chart $\R^2\subset S^2$, we can be sure both that there exists a sequence $x_i\to 0\in\R^2$ with 
$$u_i(x_i,2T_0)\geq \de>0,$$
for all $i$, and also that the point $x$ from the lemma lies in $B$. 


From here the proof is essentially the same as the $M=\C$ case. 
Again we obtain the upper bound on $B_1(x)$ from Corollary \ref{upper_cor}.
This time we also apply Corollary \ref{upper_cor} on the time interval $[T_0,2T_0]$
to get a bound $u_i(t)\leq M_0$ on 
$\overline{B_{7}(0)}\times [T_0,2T_0]$, and apply the Harnack inequality of Corollary \ref{lower_cor} with $\rho=3$.
This gives the required lower bound at all points $y_0\in B_\rho(x_i)$.
But $B_\rho(x_i)\supset B_{2}(0) \supset B_1(x)$.
\end{proof}

\subsection{Extending the limit flow}

Lemma \ref{short_time_lem} gives us short time existence of a complete flow $g(t)$ starting with the measure $\mu$, when applied to the flows $g_i(t)$ constructed in Section \ref{g_i_construct_sect}.
In this section we extend this flow to the maximal existence time given in Theorem \ref{main_thm}. 

Take any sequence $s_i\downto 0$, and consider $g(s_i)$ as a smooth complete initial metric on $M$. According to Theorem \ref{main_prior_thm}, there exists a unique complete Ricci flow starting with $g(s_i)$, with an explicit maximal existence time $T_i$. 
By uniqueness of this flow, proved in \cite{ICRF_UNIQ}, while both this flow and the original short-time flow exist, they must agree. Appealing to uniqueness once more, we see that these flows give a smooth extension that does not depend on $i$.

It remains to establish that the extension flow exists for the correct time.
More precisely, we must show that $\lim_{i\to\infty} T_i$ is  
$\frac{\mu(M)}{8\pi}$ if $M=S^2$, and $\frac{\mu(M)}{4\pi}$ if $M=\C$.

What we know is that in the former case we have $T_i=\frac{\Vol_{g(s_i)}(M)}{8\pi}$, 
and in the latter case $T_i=\frac{\Vol_{g(s_i)}(M)}{4\pi}$.
Thus it remains to establish that 
$$\lim_{i\to\infty}  \Vol_{g(s_i)}(M) = \mu(M).$$
By \eqref{vol_bd} we know that $\limsup_{i\to\infty}  \Vol_{g(s_i)}(M) \leq \mu(M)$.
On the other hand, because $\mu_{g(t)}\weakto \mu$ as $t\downto 0$, we have
$\liminf_{i\to\infty}  \Vol_{g(s_i)}(M) \geq \mu(M)$.

This completes the proof of Theorem \ref{main_thm} except for the stronger claim away from the singular part. 

\subsection{Better convergence away from the singular part}


In this section we establish the improved attainment of the initial data on the complement of the support of the singular part of $\mu$ that is claimed in Theorem \ref{main_thm}. Multiple applications of the following proposition will imply what we claimed there.

\begin{proposition}
\label{init_prop}
Suppose that $\mu$ is a nonatomic Radon measure on a Riemann surface $M$, 
and $g(t)$ is the smooth complete conformal Ricci flow on a Riemann surface $M$,
for $t\in (0,T)$, starting with $\mu$, that has been constructed as a limit of flows $g_i(t)$ from Section \ref{g_i_construct_sect}.

Then whenever we take a coordinate chart $B_3$ on which $\mu$ is nonsingular,
and write 
$\mu=u_0\mu_0$
for $\mu_0$ the Lebesgue measure and write $g(t)=u(t)(dx^2+dy^2)$, we have
$$u(t)\to u_0\quad\text{ in }L^1(B)$$
as $t\downto 0$.
\end{proposition}
We will prove the proposition using the following relative of Lemma \ref{even_stronger_again_lemma}.
One can compare with results from the literature of the fast diffusion equation, e.g. \cite{RVE97}.

\begin{lemma}
\label{loc_L1_growth_lem}
Suppose $g_i(t)=u_i(t)(dx^2+dy^2)$, $i=1,2$, are Ricci flows on the disc $B_2$ for $t\in [0,T)$, and suppose that $u_1(t)\geq \ep t$, for some $\ep>0$, throughout
$B_2\times [0,T)$. Then 
$$\left(\int_{B} [u_2(t)-u_1(t)]_+\right)^\half\leq
\left(\int_{B_2} [u_2(0)-u_1(0)]_+\right)^\half + c\left(\frac{t}{\ep}\right)^\half,$$
for all $t\in [0,T)$, where $c$ is universal.
\end{lemma}


\begin{proof}[Proof of Lemma \ref{loc_L1_growth_lem}]
We start by choosing a cut-off function $\vph\in C_c^\infty(B_2)$, taking values in $[0,1]$, with the properties that $|\lap\vph|\leq \be \vph^\half$ for some constant $\be$, and $\vph\equiv 1$ on $B$. Then we can compute
\begin{equation}\begin{aligned}
\frac{d}{dt}\int_{B_2}[u_2-u_1]_+\vph &= \int_{\{u_2>u_1\}}[\lap \log u_2-\lap\log u_1]\vph\\
&\leq \int_{B_2} [\log u_2-\log u_1]_+ |\lap\vph|\\
&\leq \be\int_{B_2}  2\left[\frac{u_2}{u_1}-1\right]_+^\half \vph^\half\\
&\leq 4\be\left(\frac{\pi}{\ep t}\right)^\half \left(\int_{B_2} [u_2-u_1]_+ \vph\right)^\half
\end{aligned}\end{equation}
where we used the inequality $\log s\leq 2[s-1]^\half$, for $s\geq 1$, 
and Cauchy-Schwarz.
Further details of this type of calculation can be found in \cite[Page 38]{giesen_thesis}.
Therefore
$$\frac{d}{dt}\left(\int_{B_2}[u_2-u_1]_+\vph\right)^\half
\leq 2\be \sqrt{\frac{\pi}{\ep}}t^{-\half},$$
which integrates to give the lemma.
\end{proof}

\begin{proof}[Proof of Proposition \ref{init_prop}]
The  Ricci flow $g(t)=u(t)(dx^2+dy^2)$ for $t\in (0,T]$  arises as a smooth local limit on $B_3\times (0,T]$ of flows $g_i(t)=u_i(t)(dx^2+dy^2)$
for $t\in [0,T]$. 
We  know that 
$$u_i(t)\geq \ep t \quad \text{and hence}\quad u(t)\geq \ep t,$$
on $B_2$ for some $\ep>0$, after possibly reducing $T>0$.


By the assumption that $\mu$ has no singular part, Lemma \ref{smooth_approx_lem} tells us that 
$u_i(0)\to u_0$ in $L^1_{loc}(B_3)$ as $i\to\infty$.
However, the initial data of $g(t)$ is only initially known to be attained weakly in the sense that for all $\vph\in C_c^0(B_3)$ we have 
$$\int_{B_3} u(t)\vph \to \int_{B_3} u_0\vph$$
as $t\downto 0$.
We would like to show that we have the strong convergence
$$u(t)\to u_0\text{ in }L^1(B).$$
In particular, for arbitrarily small $\eta>0$, we need to show that 
$$\|u(t)-u_0\|_{L^1(B)}<\eta$$
for sufficiently small $t>0$.

By the $L^1_{loc}(B_3)$ convergence $u_i(0)\to u_0$, we can \emph{fix} $i\in\N$ such that for all $j\geq i$ (in particular for $j=i$) we have
$$\|u_j(0)-u_0\|_{L^1(B_2)}\leq \eta/8.$$
Thus, for $j\geq i$ we have
$$\|u_i(0)-u_j(0)\|_{L^1(B_2)}\leq \eta/4.$$
By Lemma \ref{loc_L1_growth_lem}, we know that 
$$\int_{B} |u_i(t)-u_j(t)|\leq
2\int_{B_2} |u_i(0)-u_j(0)| + C_0t 
\leq \eta/2 + C_0t.$$
Sending $j\to\infty$ gives
$$\|u_i(t)-u(t)\|_{L^1(B)}\leq \eta/2 + C_0t.$$
We now insist that $t>0$ is sufficiently small so that $C_0t<\eta/8$, 
and also so that $\|u_i(t)-u_i(0)\|_{L^1(B)}\leq \eta/8$ for our fixed $i$.
By the triangle inequality we can then conclude
\begin{equation}\begin{aligned}
\|u(t)-u_0\|_{L^1(B)}&\leq
\|u(t)-u_i(t)\|_{L^1(B)}+\|u_i(t)-u_i(0)\|_{L^1(B)}
+\|u_i(0)-u_0\|_{L^1(B)}\\
&<(\eta/2+\eta/8)+\eta/8+\eta/8\\
&<\eta,
\end{aligned}\end{equation}
as required to complete the proof.
\end{proof}

\noindent
At this point the proof of Theorem \ref{main_thm} is complete.

\medskip

Before leaving this section we remark that although Lemma \ref{short_time_lem} is stated for Ricci flows $g_i(t)$ that are smooth down to $t=0$, a minor adjustment of the proof 
gives the following variation with measure-valued initial data.
\begin{lemma}
\label{short_time_lem_updated}
Suppose that we have a sequence of smooth complete conformal Ricci flows $g_i(t)$ on a Riemann surface $M$, for $t\in (0,T)$, such that the volume measures of each $g_i(t)$ converge weakly to nontrivial nonatomic measures $\mu_i$ as $t\downto 0$,
and these initial measures $\mu_i$ converge weakly to a nontrivial
nonatomic Radon measure $\mu$ as $i\to\infty$.
Then after passing to a subsequence, and possibly reducing $T>0$, we have
$g_i(t)\to g(t)$ smoothly locally on $M\times (0,T)$, where $g(t)$ is itself a smooth complete Ricci flow on $M$ for $t\in (0,T)$. 
Moreover, the volume measure of $g(t)$ converges weakly to $\mu$ as $t\downto 0$.
\end{lemma}



\section{Nongradient Ricci solitons originating from measure data}
\label{soliton_sect}

In \cite{breather} it was shown how to use appropriate periodically scale-invariant smooth initial data in Theorem \ref{main_prior_thm} in order to construct nontrivial breathers. Now that we can start the Ricci flow with rougher initial data, we can do a related construction in order to construct new examples of Ricci solitons, answering a number of open problems. 
For example, we show how to construct
nongradient Ricci solitons on a surface,
and solitons on $\R^2$ without rotational symmetry.


\subsection{Ricci flow starting with a line: An explicit example}
\label{explicit_ex_sect}

One of the simplest nontrivial instances in which we can apply our existence theorem \ref{main_thm} is when the underlying Riemann surface is $\C\simeq \R^2$ and the starting measure is the uniform measure on a line. More precisely, define $L:=\{0\}\times \R \subset\R^2$ to be the $y$-axis in the plane, and set
\begin{equation}
\label{soliton_initial_measure}
\mu=\h^1\llcorner L.
\end{equation}
This measure is invariant under translations in the $y$ direction, and when we pull it back by a dilation $\vph_\la(\bx)=\la \bx$, for $\la>0$, the measure is scaled by a factor $\la$.

%

Both translations and dilations are biholomorphic maps. Assuming uniqueness as in 
Conjecture \ref{uniqueness_conj} is enough to turn these invariance properties into invariance properties for the Ricci flow $g(t)$ on $\R^2$, for $t> 0$, starting at $\mu$, whose existence is given by Theorem \ref{main_thm}.
In particular, the pull-back $\vph_\la^* g(t)$ and the scaled flow $\la g(t/\la)$ are both complete Ricci flows that start with the same measure $\la\mu$, so having uniqueness gives
$$\vph_\la^* g(t)=\la g(t/\la).$$
By first setting $t=1$, and then redefining $t=1/\la$, we find that
$$g(t)=t\vph_{\frac{1}{t}}^* g(1),$$
and we have an expanding Ricci soliton. 

This argument is not complete here because we are not establishing uniqueness in this 
generality in this paper.\footnote{though see the later developments of \cite{TY4} and \cite{PT} that were made since this paper was first released.}
However, it turns out that we have enough information from the discussion above to explicitly construct this soliton. In order to keep the constants as clean as possible we opt to analyse the soliton generated by flowing the initial measure in \eqref{soliton_initial_measure} scaled by a factor of $2\pi$.

\begin{theorem}
On $\R^2$, the complete Riemannian metric $g_0=u_0(x,y)(dx^2+dy^2)$ defined by 
$$u_0(x,y):=\frac{2}{1+x^2}$$
is a nongradient expanding Ricci soliton.
In particular, 
$$g(t)=t\vph_{\frac{1}{t}}^* g_0$$
is a complete Ricci flow for $t>0$ that can be written explicitly as 
$g(t)=u(x,y,t)(dx^2+dy^2)$ where
\begin{equation}
\label{RF_explicit}
u(x,y,t)=\frac{2t}{t^2+x^2}
\end{equation}
and satisfies
\begin{equation}
\mu_{g(t)}\weakto 2\pi \h^1\llcorner L.
\end{equation}
\end{theorem}
The proof is a straightforward computation. In particular, we can verify that the 
conformal factor that is given explicitly by \eqref{RF_explicit} satisfies
$$\pl{u}{t}=\frac{2(x^2-t^2)}{(t^2+x^2)^2}=\lap\log u.$$
The assertion that the soliton is nongradient follows from the well-known fact that a gradient Ricci soliton on a surface with potential function $f$ must have the rotated gradient $J\grad f$ being a Killing field (see e.g. \cite[Chapter 1, \S 3.1]{chowIIgeometric}). 
Up to scaling, the only Killing field for $g_0$ is vertical translation.
This is the first nongradient soliton that is known in two dimensions. It extends to higher dimensions by taking a product with a Gaussian soliton.
In higher dimensions, homogeneous nongradient solitons have been constructed in
\cite{baird, lott}.
In contrast, gradient solitons have been classified in two dimensions; see e.g. \cite{Bernstein_Mettler}. 

We see from the formula for this soliton that this Ricci flow diffuses the mass over length scales of order $t$. This is consistent with the estimates elsewhere in this paper, and can be contrasted with the standard linear heat equation, which averages over length scales of order $\sqrt{t}$.

As for the geometry of the metric $g_0$, we can compute its Gauss curvature to be 
$$K=\half\left(\frac{1-x^2}{1+x^2}\right).$$
For large $x$, it looks like spatial infinity in the half-space model of hyperbolic space (scaled to have Gauss curvature $-\half$).
Along the $y$ axis the Gauss curvature is $\half$.
The effect of the Ricci flow away from the $y$-axis begins like the big-bang Ricci flows on the half-spaces on either side of the $y$ axis. 
The Gauss curvature is controlled by
$$-\frac{1}{2t} < K_{g(t)}\leq \frac{1}{2t}.$$

\begin{remark}
Because the flow is independent of $y$, we can quotient it by a translation $y\mapsto y+c$ or  consider it as a one-dimensional solution of the equation, depending only on $x$. In the latter case the solution arises as a limit of Barenblatt-Pattle solutions 
\cite{DDD}. However, both of these viewpoints destroy the soliton structure. 
\end{remark}


\begin{remark}
The metric we construct here can be considered as being dual to the hyperbolic plane, scaled to have curvature $-\half$, in the sense of Buscher duality from string theory.
See the discussion in \cite{chowIIgeometric}.
\end{remark}

%


\subsection{More Ricci solitons on the plane}

The idea  motivating the construction above generalises to give a large class of additional examples of Ricci solitons on surfaces. By lifting them to the universal cover, we can consider them on $\R^2$ and the half plane.

\subsubsection{Translating solitons in the plane}
\label{translate_sect}

If we are given a nonnegative function $f\in L^1_{loc}(\R)$ that is not identically zero, we can generate a function $u_0:\R^2\to [0,\infty)$ 
by defining
$$u_0(x,y):=e^{x}f(y).$$
Taking $u_0$ to be the density of a measure $\mu$ on the plane, we can apply Theorem \ref{main_thm} to give a Ricci flow $g(t)$ starting with $\mu$.
Defining the 
shift $T_\si:\R^2\to\R^2$ by $T_\si(x,y)=(x+\si,y)$, we see that
$$T_\si^*\mu = e^{\si}\mu,$$
and so both $T_\si^*g(t)$ and $e^{\si}g(t e^{-\si})$ are Ricci flows starting with the same initial measure. By the uniqueness of Theorem \ref{L1_uniq} we deduce that
$$e^{\si}g(t e^{-\si})=T_\si^*g(t),$$ 
which implies
$$g(t)=t\, T^*_{-\log t}g(1),$$
and we have generated an expanding soliton on $\R^2$ that moves by translation.

For context, consider the trivial case that $f\equiv 1$. 
Then geometrically $\mu$ is the volume measure of the universal cover of the punctured plane. The soliton we have described in this trivial case is the universal cover of the time $1$ instantaneously complete evolution of the punctured plane.


\subsubsection{Dilating and rotating solitons in the plane}
\label{dilate_sect}

As a variation of the  construction above, 
if we are given $\al>0$, $\be\in\R$ and a nonnegative function $f\in L^1_{loc}(S^1)$ that is not identically zero, we can generate a function $u_0:\R\times S^1\to [0,\infty)$ 
by defining
$$u_0(x,\theta):=e^{\al x}f(\theta+\be x).$$
Taking $u_0$ to be the density of a measure on the cylinder $\R\times S^1$, and  pushing it forward to the plane under the conformal map $(x,\theta)\mapsto (e^x,\theta)$ gives a Radon measure $\mu$ on the whole plane with density relative to Lebesgue measure  given in polar coordinates by the $L^1_{loc}$ function
$$(r,\theta)\mapsto r^{\al-2}f(\theta+\be\log r).$$ 
Note that asking for $\al$ to be strictly positive makes $\mu$ locally finite, in particular near the origin. 
That density function has been constructed so that if we pull back $\mu$ under the conformal diffeomorphism 
$\psi_\la:\R^2\to\R^2$ defined in polar coordinates by 
$$\psi_\la(r,\theta)=(\la r, \theta-\be\log\la),$$
where $\la>1$
(corresponding to adding $\log \la$ to $x$ and rotating by $-\be\log\la$ in the cylinder picture)
then it is scaled according to
$$\psi_\la^*\mu=\la^\al \mu.$$
A trivial example would be $f\equiv 1$, $\be=0$ and $\al=2$, which gives the flat plane. Changing $\al$ adjusts this to a nontrivial cone metric. 

Let $g(t)$ be the Ricci flow starting with $\mu$, as given by Theorem \ref{main_thm}.
Consider starting the Ricci flow with the initial measure $\la^\al\mu$.
On the one hand, $\psi_\la^*g(t)$ is such a flow, and on the other hand
$\la^\al g(t/\la^\al)$ is such a flow. 
By the uniqueness of Theorem \ref{L1_uniq}, we then must have
$$\psi_\la^*g(t)=\la^\al g(t/\la^\al).$$
Then 
$$g(t)=t \psi_{t^{-1/\al}}^* g(1),$$
and we have an expanding Ricci soliton.
In the case that $\be\neq 0$, we see that the diffeomorphism $\psi_{t^{-1/\al}}$ rotates an unbounded number of times as $t\downto 0$.

A simple example corresponding to the case $\al=2$ is the soliton generated by the measure on the plane that is Lebesgue measure in a half space, and the trivial measure on the complementary half space.

\subsection{Solitons in the half plane}

Essentially identical constructions to the ones above give a large class of new expanding solitons on the half plane, or equivalently on the disc. 

First, we can modify Section \ref{translate_sect} by taking instead 
$f\in L^1_{loc}(0,\infty)$, in which case $u_0$ will live on $\R\times (0,\infty)$. 
We then apply Theorem \ref{main_thm} on the half plane instead, and the rest of the argument follows verbatim with the shift $T_\si$ restricted to this half plane.

Meanwhile, we have a half plane version of the dilating solitons of Section 
\ref{dilate_sect} arising by restricting $f$ to half a circle, i.e. the interval $(0,\pi)$, and setting $\be=0$. 
Thus the density is given in polar coordinates by
$$(r,\theta)\mapsto r^{\al-2}f(\theta),$$ 
We apply Theorem \ref{main_thm} in the half plane and end up with an expanding soliton
of the form
$$g(t)=t \vph_{t^{-1/\al}}^* g(1).,$$

\begin{ex}
\label{half_space_ex}
A simple example corresponding to the case $\al=2$ and $f\equiv 1$ is the soliton generated by the half plane $\{y>0\}$ with the Euclidean metric.
The corresponding Ricci flow could be written $u(x,y,t)(dx^2+dy^2)$ where
$$u(x,y,t)=F\left(\frac{y}{\sqrt{t}}\right)$$
for some decreasing function  $F:(0,\infty)\to (1,\infty)$ satisfying an appropriate ODE with asymptotic behaviour 
$F(s)\downto 1$ as $s\to\infty$ (which is guaranteed by the smooth local convergence of any Ricci flow generated by Theorem \ref{main_prior_thm} to its smooth initial data)
and $F(s)\geq\frac{2}{s^2}$ (which is forced because the Ricci flow corresponding to the conformal factor $F\left(\frac{y}{\sqrt{t}}\right)$ must lie above the conformal factor of the big-bang Ricci flow corresponding to the conformal factor $\frac{2t}{y^2}$.)
One could check that $F(s)\sim\frac{2}{s^2}$ for small $s>0$.
\end{ex}


\subsection{Solitons arising from singular measures}

%
%
%

The constructions above can be generalised further, replacing each of the $L^1_{loc}$ functions $f$ by more general Radon measures. 
For example, the explicit example of Section \ref{explicit_ex_sect} would arise from the abstract construction of Section \ref{dilate_sect} by taking $f$ to be the sum of two delta masses on opposite sides of the circle, and setting $\al=1$ and $\be=0$.
We do not pursue this matter here because the optimal route is via the uniqueness conjecture \ref{uniqueness_conj}, in which case the constructions would be identical to the ones above. 
Nevertheless, the interested reader can verify that even without Conjecture \ref{uniqueness_conj} we can treat these cases rigorously here by mollifying the Radon measures that substitute for $f$, generating the corresponding expanding Ricci solitons as above, and then taking a limit as we mollify less and less, using the estimates developed in this paper.

As an example, one could imagine the soliton generated by flowing the Hausdorff measure $\h^1$ restricted to a logarithmic spiral in the plane.


\section{Smooth Ricci flows attaining smooth initial data weakly}
\label{DSS_Q}

In this section we prove Theorem \ref{counter_ex_thm}, asserting the existence of a Ricci flow starting with the Euclidean plane in the weak sense that the Riemannian distance converges, but that is not the normal static solution. 
\begin{proof}
The Ricci flow $g(t)$ will arise directly as in the proof of Theorem \ref{main_thm}, except that we would like to specify the approximations $g_i$ explicitly rather than by appealing to Lemma \ref{smooth_approx_lem}. (It is not significant that we will allow our approximating metrics to have infinite volume.)
First we ask that $g_i=u_i(x)(dx^2+dy^2)$ for some function $u_i:\R\to (0,\infty)$
with $u_i(x)=u_i(-x)$. 
That is, the metric is invariant under vertical translations and by reflection in the $y$ axis.

Next we ask that $g_i\geq g_0$ for all $i$. This bound will be inherited by the subsequent flows $g_i(t)$ given by Theorem \ref{main_prior_thm}, i.e. $g_i(t)\geq g_0$ for all $t\geq 0$, and ultimately inherited by the limit Ricci flow $g(t)=u(x,t)(dx^2+dy^2)$. Consequently we have  $u(x,t)\geq 1$, and thus
$$d_{g(t)}\geq d_{g_0}$$
for all $t>0$ and throughout $\R^2\times \R^2$.

It will be even more helpful to ask that $g_i=g_0$ on $\{|x|\geq 1/i\}$, i.e. away from a strip around the $y$ axis, since this will allow us to derive  
an upper barrier for $g_i$, which in turn will give \emph{upper} bounds on $d_{g(t)}$.
Recall the soliton proposed in Example \ref{half_space_ex}
that originates from flowing a half space with the Euclidean metric.
As discussed there, if we consider that soliton on the right half plane $\{x>0\}$, then we can write it as $F\left(\frac{x}{\sqrt{t}}\right)(dx^2+dy^2)$.
(Recall that $F(s)$ is a decreasing function that converges to $1$ as $s\to\infty$.)
Because $F\geq 1$, the Ricci flow on the shifted half plane $\{x>1/i\}$ with conformal factor $F\left(\frac{x-1/i}{\sqrt{t}}\right)$ will (being `maximally stretched' \cite{GT2}) lie above the conformal factor $u_i(x,t)$ of the restriction of $g_i(t)$ to 
$\{x>1/i\}$, i.e.
$$u_i(x,t)\leq F\left(\frac{x-1/i}{\sqrt{t}}\right),$$
for $x>1/i$ and $t>0$.
If we then extract a (subsequential) limit Ricci flow $g(t)=u(x,t)(dx^2+dy^2)$, as in the proof of Theorem \ref{main_thm}, it will inherit the upper bound
$$u(x,t)\leq F\left(\frac{x}{\sqrt{t}}\right).$$
We can use this upper bound in order to control $u(x,t)$ to be close to $1$ outside a strip around the $y$ axis $L$ that is becoming thinner and thinner. More precisely, given arbitrarily small $\ep>0$, 
pick $Q>0$ large enough so that $F(Q)\leq (1+\ep)^2$. 
Then for $x\geq Q\sqrt{t}$ we have
$$u(x,t)\leq F\left(\frac{x}{\sqrt{t}}\right)\leq F(Q)\leq(1+\ep)^2.$$
Although $Q$ is liable to be large, we will be able to take such small $t$ that 
$Q\sqrt{t}$ can be made as small as we like, i.e. the strip can be made very thin.

This upper bound for $u$ gives us very good control on $d_{g(t)}(p,q)$ for $p$ and $q$ in the half plane $\{x\geq Q\sqrt{t}\}$. Precisely, we have
$$d_{g(t)}(p,q)\leq (1+\ep)d_{g_0}(p,q).$$
By symmetry, we also then have control for $p$ and $q$ in the opposite half plane $\{x\leq -Q\sqrt{t}\}$.
All remaining cases can be reduced to the problem of controlling 
$d_{g(t)}(p,q)$ for $p\in L$ (i.e. $p$ on the $y$ axis) and $q$ in the half plane
$\{x\geq 0\}$. By imagining shifting both $p$ and $q$ to the right by an amount $Q\sqrt{t}$, and using the case above, we reduce to showing that 
the $d_{g(t)}$ distance between a point on the $y$ axis $L$ and its shift to the right by $Q\sqrt{t}$ can be made as small as we like. But
$$
d_{g(t)}((0,0),(Q\sqrt{t},0)) =\int_0^{Q\sqrt{t}} \sqrt{u(x,t)}dx
\leq Q^\half t^\frac{1}{4}\left(\int_0^{Q\sqrt{t}} u(x,t)dx\right)^\half
$$
and 
$$\int_{-1}^1 u(x,t)dx \to 3$$
as $t\downto 0$ because  $\Vol_{g(t)}([-1,1]\times [0,1])$ will have to converge to $\mu([-1,1]\times [0,1])=3$.
\end{proof}

It may be worth pointing out that the length of a vertical path between two points 
on the $y$ axis $L$ that are Euclidean distance $1$ from each other will blow up as $t\downto 0$. In reality, the vertical  path is not the shortest.

We also observe that the Ricci flow that we construct for Theorem \ref{counter_ex_thm} has the additional property that it converges smoothly locally to the Euclidean plane on 
$\R^2\setminus L$.

\begin{remark}
Following on from this paper, M.-C. Lee and the first author have refined 
the properties of the Ricci flow introduced in Theorem \ref{counter_ex_thm}; see
\cite{LT1}.
\end{remark}

\section{There cannot exist a solution starting with a Dirac mass}

In this section we prove Theorem \ref{no_dirac_mass_thm}, ruling out the existence of flows starting at  measures with isolated atoms.
An important ingredient will be the following a priori estimate that controls an arbitrary Ricci flow  starting locally at initial data with bounded conformal factor.


\begin{lemma}
\label{local_upper_bds_bdd_init_data_lem}
Suppose $g(t)=u(t)(dx^2+dy^2)$ is a smooth Ricci flow on $B_R$, some $R>0$, for $t\in (0,T)$.
Suppose that $u_0\in L^\infty(B_R)$ with $0\leq u_0\leq L$.
Suppose $\mu_{g(t)}$ converges weakly to the measure 
$\mu:=u_0\mu_0$ 
corresponding to $u_0$ as $t\downto 0$, where $\mu_0$ is the Lebesgue measure.
Then for $t\in (0,T)$ satisfying $t\leq R^2L$ the conformal factor at the origin is controlled by
$$u(0,t)\leq C_1 L,$$
for universal $C_1$.
\end{lemma}

\begin{proof}
Because $t\leq R^2L$, if we set $r= \frac{1}{3}(\frac{t}{L})^\half$ then $B_{3r}\subset B_R$.
Because $u_0\leq L$, we must have $\mu(B_{3r})\leq L\pi(3r)^2=\pi t$.
By the weak convergence $\mu_{g(t)}\weakto \mu$
as $t\downto 0$, for sufficiently small $t_0\in (0,t)$ we must have $\Vol_{g(t_0)}(B_{2r})\leq 2\pi (t-t_0)$. 
Thus we can apply Theorem \ref{TY1_thm}, thinking of time $t_0$ here as time $0$ there,
to deduce that 
$$u(0,t)\leq C_0r^{-2}(t-t_0)=9C_0L(t-t_0)/t\leq 9C_0 L.$$
\end{proof}


\begin{proof}[Proof of Theorem \ref{no_dirac_mass_thm}]
Suppose a flow $g(t)$ did exist, contrary to the claim of the theorem. 
By taking an appropriate coordinate chart $B_2$ in $M$ we would have a Ricci flow 
$g(t)=u(t)(dx^2+dy^2)$ for $t\in (0,\ep)$ and  a smooth function $u_0:B\to [0,1]$
such that 
$$u(t)\mu_0 \weakto \de_0 + u_0\mu_0$$
as $t\downto 0$.
Let $C_1$ be the universal constant from Lemma \ref{local_upper_bds_bdd_init_data_lem}. 
Then according to Lemma \ref{local_upper_bds_bdd_init_data_lem}, for each $x\in B\setminus\{0\}$, we have
$$u(x,t)\leq C_1 $$
for all $t\in (0,T)$ with $t\leq |x|^2$, or equivalently we have the bound for all $t\in (0,T)$ and all $x\in B\setminus B_{\sqrt{t}}$.
Therefore for each $r\in (0,\half)$, if $u_r(t)$ is the conformal factor of the unique smooth instantaneously complete Ricci flow on $B\setminus B_r$ starting with a conformal factor identically equal to $C_1$
then $u(t)\leq u_r(t)$ where defined, for all $t\in (0,T)$.
Here we start the comparison at some time $t<|x|^2$.
Letting $r\downto 0$, we find that $u(t)$ lies below the conformal factor of 
the unique smooth instantaneously complete Ricci flow on $B\setminus \{0\}$ starting identically equal to $C_1$.
In particular, for each $r>0$ (however small) we have
$$\limsup_{t\downto 0}\Vol_{g(t)}(B_r)\leq C_1\pi r^2,$$
which converges to zero as $r\downto 0$.
This contradicts the weak convergence to $\mu$ since there cannot then be a delta mass at the origin.
\end{proof}


\end{document}